\newtheorem{thm}{Theorem}[section]
\newtheorem{lemma}[thm]{Lemma}
\theoremstyle{definition}
\newtheorem{defn}[thm]{Definition}
\newtheorem{example}[thm]{Example}
\newtheorem{assumption}[thm]{Assumption}
\theoremstyle{remark}
\newtheorem{remark}[thm]{Remark}
\def\qed{{\hfill $\Box$ \bigskip}}
\def\XXint#1#2#3{{\setbox0=\hbox{$#1{#2#3}{\int}$}
\vcenter{\hbox{$#2#3$}}\kern-.5\wd0}}
\def\<{\langle}
\def\>{\rangle}
\newcommand\bH{\mathbb{H}}
\newcommand\bZ{\mathbb{Z}}
\newcommand\fR{\mathbf{R}}
\newcommand\cA{\mathcal{A}}
\newcommand\cF{\mathcal{F}}
\newcommand\cG{\mathcal{G}}
\newcommand\cI{\mathcal{I}}
\newcommand\cR{\mathcal{R}}
\newcommand{\mysection}[1]{\section{#1}
\setcounter{equation}{0}}
\begin{document}

\title[Calder\'on-Zygmund Approach for pseudo-differential operators]{An $L_q(L_p)$-theory for parabolic pseudo-differential equations: Calder\'on-Zygmund approach}

\author{Ildoo Kim}
\address{Department of Mathematics, Korea University, 1 Anam-dong, Sungbuk-gu, Seoul,
136-701, Republic of Korea} \email{waldoo@korea.ac.kr}

\author{Kyeong-Hun Kim}
\address{Department of Mathematics, Korea University, 1 Anam-dong,
Sungbuk-gu, Seoul, 136-701, Republic of Korea}
\email{kyeonghun@korea.ac.kr}
\thanks{This work was supported by Samsung Science  and Technology Foundation under Project Number SSTF-BA1401-02}

\author{Sungbin Lim}
\address{Department of Mathematics, Korea University, 1 Anam-dong, Sungbuk-gu, Seoul,
136-701, Republic of Korea} \email{sungbin@korea.ac.kr}

\subjclass[2010]{35S10, 35K30, 35B45, 42B20}

\keywords
{Calder\'on-Zygmund approach, Parabolic Pseudo-differential equations, $L_q(L_p)$-estimate}

\begin{abstract}
In this paper we present a Calder\'{o}n-Zygmund approach for a large class of parabolic equations with  pseudo-differential operators $\mathcal{A}(t)$ of arbitrary order $\gamma\in(0,\infty)$. It is assumed that $\cA(t)$ is  merely measurable with respect to the time variable.  The unique solvability of the equation
$$
\frac{\partial u}{\partial t}=\cA u-\lambda u+f, \quad (t,x)\in \fR^{d+1}
$$
 and  the $L_{q}(\fR,L_{p})$-estimate
$$
\|u_{t}\|_{L_{q}(\fR,L_{p})}+\|(-\Delta)^{\gamma/2}u\|_{L_{q}(\fR,L_{p})}
+\lambda\|u\|_{L_{q}(\fR,L_{p})}\leq N\|f\|_{L_{q}(\fR,L_{p})}
$$
are obtained for  any $\lambda > 0$  and $p,q\in (1,\infty)$.
\end{abstract}

\maketitle

\mysection{Introduction}

Calder\'{o}n-Zygmund theorem has been a powerful tool in the theory of both elliptic and parabolic differential equations. See, for instance,   \cite{caffarelli1998w, gilbarg2001elliptic, iwaniec1983projections} (elliptic equations) and
 \cite{fabes1966singular, jones1964class, krylov2001calderon, Krylov2002} (2nd order parabolic equations).
In particular, Krylov \cite{krylov2001calderon, Krylov2002} introduced a  Calder\'on-Zygmund approach to obtain  $L_{q}(\mathbf{R},L_{p})$ and $L_p(\fR, C^{2+\alpha})$ -estimates for the second-order parabolic equations with merely measurable  coefficients with respect to the time variable.

In this article we use a Calder\'on-Zygmund approach to study the parabolic equation
\begin{equation}
                  \label{eqn 8.13}
   \frac{\partial u}{\partial t}=\cA u-\lambda u+f, \quad (t,x)\in \fR^{d+1}.
   \end{equation}
It is assumed that the  pseudo-differential operator $\cA(t)$  is merely measurable in $t$ and its symbol $\psi(t,\xi)$
satisfies
\begin{equation}
                     \label{eqn sym1}
\Re[-\psi(t,\xi)]\geq  \kappa|\xi|^{\gamma},\quad \quad \forall \, \xi\in\mathbf{R}^{d}
\end{equation}
and
\begin{equation}
               \label{eqn sym2}
|D_{\xi}^{\alpha}\psi(t,\xi)|\leq\kappa^{-1}|\xi|^{\gamma-|\alpha|},\quad \quad \forall \, \xi\in\mathbf{R}^{d}\setminus\{0\}, \,\, |\alpha|\leq\lfloor\frac{d}{2}\rfloor+1
\end{equation}
for some $\gamma, \kappa>0$. No  regularity condition of $\cA(t)$ in the time variable is assumed and the differentiability condition of $\psi(t,\xi)$ with respect to $\xi$ is only up to order  $\lfloor\frac{d}{2}\rfloor+1$.  Conditions (\ref{eqn sym1}) and (\ref{eqn sym2}) are  satisfied by a large class of pseudo-differential operators including $2m$-order differential operators and integro-differential operators. See Section 2 for some examples.  We only mention that if $\cA_1(t)$ and $\cA_2(t)$ satisfy the conditions with $\gamma_1$ and $\gamma_2$ respectively then for any constants $a,b>0$ the operator $\cA_{a,b}=-(-\cA_1)^a(-\cA_2)^b$ satisfies the conditions with $\gamma=a\gamma_1+b\gamma_2$ if for instance the symbols of $\cA_i$ are real-valued.

Our approach aims to  prove the $L_{q}(\fR,L_{p})$-estimate
 \begin{equation}
              \label{lqlp}
\|u_{t}\|_{L_{q}(\fR,L_{p})}+\|(-\Delta)^{\gamma/2}u\|_{L_{q}(\fR,L_{p})}
+\lambda\|u\|_{L_{q}(\fR,L_{p})}\leq N\|f\|_{L_{q}(\fR,L_{p})}
\end{equation}
 for any  $u\in C^{\infty}_0(\fR^{d+1})$ and $f:=u_t-\cA u+\lambda u$. We remark that  the classical multiplier theorem is not applicable to derive estimates like (\ref{lqlp}) because $\cA(t)$ is only measurable in $t$.

 We first  prove
 $$
 \lambda\|u\|_{L_{q}(\fR,L_{p})}\leq N\|f\|_{L_{q}(\fR,L_{p})}
 $$
  based on the  representation formula of solutions  and a few direct calculations.  Next we introduce a kernel $K(t,x,s,y)$ so that for any $u\in C^{\infty}_0(\fR^{d+1})$ and $f:=u_t-\cA u+\lambda u$ we have
 \begin{equation}
            \label{eqn 8.14}
 (-\Delta)^{\gamma/2}u(t,x)=\int_{\fR^{d+1}}K(t,x,s,y)f(s,y)dsdy=:\cG f(t,x).
 \end{equation}
Then, we prove
\begin{align}\label{goal}
\left\|\mathcal{G}f\right\|_{L_{q}(\mathbf{R},L_{p})} \leq N \|f\|_{L_{q}(\mathbf{R},L_{p})}.
\end{align}
The major step to prove (\ref{goal}) is to construct $(\mathbb{Q}_{m},m\in\mathbb{Z})$, a filtration of partitions of $\fR^{d+1}$ (see Definition \ref{def:filtraion}), and  show  that for any $Q\in \bigcup_{m\in \bZ}\mathbb{Q}_m$ the
following  H\"{o}rmander condition (cf. \cite{grafakos2008classical,grafakos2009modern,Krylov2002}) holds:
\begin{align}\label{key:hormander}
\sup_{(s,y),(r,z)\in Q}\int_{\mathbf{R}^{d+1}\setminus Q^{*}}|K(t,x,s,y)-K(t,x,r,z)|dxdt < \infty,
\end{align}
where $Q^*$ is an appropriate dilation of $Q$. The H\"ormander condition and the Calder\'on-Zygmumd theorem easily yield (\ref{goal}).

 It is  well known  that for the elliptic operators H\"{o}rmander condition is fulfilled if the related  kernel $K(x,y)$ is a \emph{standard kernel} i.e. $K(x,y)$ defined on $\mathbf{R}^{2d}\setminus\{(x,x):x\in\mathbf{R}^{d}\}$ satisfies
\begin{align*}
|K(x,y)|\leq \frac{N}{|x-y|^{d}}
\end{align*}
and
\begin{align*}
|K(x,y)-K(z,y)|\leq N\frac{|x-z|^{\alpha}}{|x-y|^{d+\alpha}}
\end{align*}
whenever $|x-z|\leq \frac{1}{2}\max\{|x-y|,|z-y|\}$ and
\begin{align*}
|K(x,y)-K(x,z)|\leq N\frac{|y-z|^{\alpha}}{|x-y|^{d+\alpha}}
\end{align*}
whenever $|y-z|\leq \frac{1}{2}\max\{|x-y|,|x-z|\}$ (see \cite{grafakos2009modern} for details).

In this article we  study a parabolic version of this result and investigate a sufficient condition on kernel $K(t,x,s,y)$ so that \eqref{key:hormander} holds for any $Q\in \bigcup_{m\in \bZ}\mathbb{Q}_m$. The filtration of partition $(\mathbb{Q}_{m},m\in\mathbb{Z})$ is constructed only according to the order of the operator $\cA(t)$. It turns out that if the order of $\cA(t)$ is not rational  then constructing appropriate filtration of partitions by itself is a quite challenging work.

To the best of our knowledge,  only few studies have been made on Calder\'{o}n-Zygmund approach for non second-order parabolic equations.
If $p=q$, a result on integro-differential operators  of the type
\begin{equation*}
\mathcal{A}^{(\alpha)}f:=\int_{\mathbf{R}^{d}}\left(f(x+y)-f(x)-1_{|y|\leq 1}(y\cdot\nabla f(x))\right) \frac{m(t,y)}{|y|^{d+\alpha}}, \quad \alpha\in (0,2)
\end{equation*}
was introduced in \cite{Mikulevivcius1992} under certain assumptions on $m(t,y)$.
 The version of Calder\'{o}n-Zygmund decomposition of $\mathbf{R}^{d+1}$ introduced in  \cite{Mikulevivcius1992} uses  non-congruent rectangles to construct $\mathbb{Q}_n$ for each $n$ and the non-congruency of such rectangles depends also on the given function $u$. We believe that  the constants in the $L_p$-estimates of \cite{Mikulevivcius1992} are not controllable due to such non-congruency and the proof of  \cite{Mikulevivcius1992} is incomplete.   In this article we use congruent cubes to construct $\mathbb{Q}_n$ and our construction depends only on the order of the operator $\cA(t)$.  Our results certainly cover that of \cite{Mikulevivcius1992} (see Example \ref{ex 3}).

Below are some related $L_p$-estimates on non-local parabolic equations based on different approaches. Recently in \cite{Kim2014BMOpseudo} the authors proved a priori estimate (\ref{lqlp}) for the case $p=q$ and $\lambda=0$ using a BMO-$L^{\infty}$ type estimate. However this approach by itself is not enough to treat  the case $p\neq q$. Moreover the unique solvability of equation (\ref{eqn 8.13}) is not obtained in \cite{Kim2014BMOpseudo}.
In \cite{zhang2013lp},  \eqref{goal} is proved for the symbol of order $\gamma\in(0,2)$ which can be represented by the L\'{e}vy-Khintchine's formula
$$
\psi(\xi)=\int_{\mathbf{R}^{d}}\left(1+i(\xi\cdot y)1_{|y|\leq 1}-\exp\{i\xi\cdot y\}\right)\nu(dy),
$$
where  $\nu$ is a L\'{e}vy measure controlled from the below and the above by the L\'evy measures of  two  $\alpha$-stable processes. This result is based on a probabilistic method regarding L\'{e}vy processes which is legitimate only if the symbol $\psi(\xi)$ is independent of $t$ and its order is in $(0,2)$.  In this article we do not use any probabilistic method and no restriction on the order and time regularity  of $\cA(t)$ is assumed.

The article is organized as follows. Our main results are formulated in Section 2.
In Section 3, we illustrate the division-merger procedure to construct the filtration of partitions we need.
 The proofs of main theorems  and some auxiliary results are given in Sections 4, 5,  and 6.

We finish the introduction with  some notation used in this article. As usual $\fR^{d}$ stands for the Euclidean space of points
$x=(x^{1},...,x^{d})$,  $B_r(x) := \{ y\in \fR^d : |x-y| < r\}$  and
$B_r
 :=B_r(0)$.
 For  multi-indices $\alpha=(\alpha_{1},...,\alpha_{d})$,
$\alpha_{i}\in\{0,1,2,...\}$, $x \in \fR^d$, and  functions $u(x)$ we set
$$
 u_{x^{i}}=\frac{\partial u}{\partial x^{i}}=D_{i}u,\quad \quad
D^{\alpha}u=D_{1}^{\alpha_{1}}\cdot...\cdot D^{\alpha_{d}}_{d}u,
$$
$$
x^\alpha = (x^1)^{\alpha_1} (x^2)^{\alpha_2} \cdots (x^d)^{\alpha_d},\quad \quad
|\alpha|=\alpha_{1}+\cdots+\alpha_{d}.
$$
We also use $D^m_x$ to denote a partial derivative of order $m$  with respect to $x$.
For an open set $\Omega \subset \fR^d$
by $C^{\infty}_0(\Omega)$  we denote the set of infinitely differentiable  functions with compact support in $U$. For a Banach space $F$ and $p>1$ by $L_p(U,F)$ we denote the set of $F$-valued measurable functions $u$ on $\Omega$ satisfying
$$
\|u\|_{L_{p}(\Omega,F)}=\left(\int_{\Omega}\|u(x)\|_{F}^{p}dx\right)^{1/p}<\infty.
$$
We write $f\in L_{p,loc}(U,F)$ if  $\zeta f\in L_p(U,F)$ for any real-valued $\zeta\in C^{\infty}_0(U)$. Also $L_p(\Omega)=L_p(\Omega, \fR)$ and $L_p=L_p(\fR^d)$.
We use  ``$:=$" to denote a definition. $\lfloor a \rfloor$ is the biggest integer which is less than or equal to $a$.
By $\cF$ and $\cF^{-1}$ we denote the d-dimensional Fourier transform and the inverse Fourier transform, respectively. That is,
$\cF(f)(\xi) := \int_{\fR^{d}} e^{-i x \cdot \xi} f(x) dx$ and $\cF^{-1}(f)(x) := \frac{1}{(2\pi)^d}\int_{\fR^{d}} e^{ i\xi \cdot x} f(\xi) d\xi$.
For a Borel
set $A\subset \fR^d$, we use $|A|$ to denote its Lebesgue
measure and by $1_A(x)$ we denote  the indicator of $A$. $\textnormal{diam }A := \sup_{x,y \in A} |x-y|$. For a complex number $z$, $\Re[z]$ is the real part of $z$. Finally if we write $N=N(a,b,\ldots)$, this means that the constant $N$ depends only on $a,b,\ldots$.

\mysection{Main results}

Let $K(t,x,s,y)$ be a complex-valued measurable function on $\fR^{2d+2}$ satisfying $K(t,x,s,y)=K(t,x,s,y)1_{t>s}$. For $f \in C_0^\infty(\fR^{d+1})$ denote
$$
\cG f(t,x) = \int K(t,x,s,y)f(s,y)~dsdy.
$$
In this section we provide a sufficient condition on $K$ so that $\cG$ admits a weak type $(1,1)$ estimate, and using this result we obtain a $L_q(\fR, L_p)$ estimate for pseudo-differential operators $\cA(t)$.

Here is our assumption on the kernel $K$.
\begin{assumption}          \label{as 1}
There exist a constant $\gamma>0$ and  a nonnegative nondecreasing function $\varphi$ on $\fR_+$ such that

(i) for all $a>s$ and $y,z \in \fR^d$,
\begin{align}       \label{as 1 1}
\int_a^\infty \int_{\fR^d}|K(t,x,s,y)- K(t,x,s,z)|~dxdt \leq \varphi\big( \frac{|y-z|}{(a-s)^{1/\gamma}} \big);
\end{align}

(ii) for all $a>b\geq (s \vee r)$ and $y\in\mathbf{R}^{d}$
\begin{align}       \label{as 1 2}
\int_a^\infty \int_{\fR^d}| K(t,x,s,y)- K(t,x,r,y)|~dxdt \leq \varphi\big(\frac{|s-r|}{a-b} \big);
\end{align}

(iii) for all $b>s$ and $\rho>0$,
\begin{align}       \label{as 1 3}
\int_s^b \int_{|x-y| \geq \rho} |K(t,x,s,y)|~dxdt \leq \varphi\big( \frac{(b-s)^{1/\gamma}}{\rho} \big).
\end{align}

\end{assumption}

The proof of following results are given in Section \ref{pf main 11}.

\begin{thm}
                    \label{main 11}
Let $1<p \leq p_0$ and  Assumption \ref{as 1} hold. Assume that $\cG f$ is well defined for any $f\in C^{\infty}_0(\fR^{d+1})$ and the inequality
\begin{align}
                     \label{p0 conti}
\|\cG f \|_{L_{p_0}(\fR^{d+1})} \leq N_0 \|\cG f \|_{L_{p_0}(\fR^{d+1})}
\end{align}
 holds with some constant $N_0$  independent of $f$. Then the operator $\cG$ is uniquely extendable to a bounded operator on $L_p(\fR^{d+1})$
and satisfies the weak type $(1.1)$ estimate, (i.e.) for any $f \in C_0^\infty(\fR^{d+1})$ and $\alpha >0$
$$
\alpha \big| \{ (t,x) : \cG f(t,x) > \alpha \} \big| \leq N \|f\|_{L_1(\fR^{d+1})},
$$
where $N$ depends only on $d$, $p_0, \gamma, N_0$, and the function $\varphi$.
\end{thm}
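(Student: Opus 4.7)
Once the weak-type $(1,1)$ bound is established, $L_{p}$-boundedness on $(1,p_{0}]$ follows by Marcinkiewicz interpolation against the strong $(p_{0},p_{0})$ estimate \eqref{p0 conti}, so the only real task is weak-$(1,1)$. The plan is to run a Calder\'on-Zygmund decomposition adapted to the anisotropic parabolic scaling $(t,x)\mapsto(\lambda^{\gamma}t,\lambda x)$ natural for a symbol of order $\gamma$. The essential input is the filtration of parabolic partitions $(\mathbb{Q}_{m})_{m\in\bZ}$ to be constructed in Section~3: a typical cube $Q\in\mathbb{Q}_{m}$ has spatial side $\sim 2^{m}$ and temporal side $\sim 2^{m\gamma}$, so that cubes in a fixed level are congruent and consecutive levels have bounded doubling ratio.

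Given $f\in C_{0}^{\infty}(\fR^{d+1})$ and $\alpha>0$, I would perform the standard stopping-time decomposition of $|f|$ at height $\alpha$ along this filtration, obtaining a disjoint family $\{Q_{j}\}$ on which the average of $|f|$ exceeds $\alpha$ while the parent average does not. Setting $c_{j}=\frac{1}{|Q_{j}|}\int_{Q_{j}}f$, $g=f\,1_{(\cup_{j}Q_{j})^{c}}+\sum_{j}c_{j}\,1_{Q_{j}}$, $b_{j}=(f-c_{j})1_{Q_{j}}$, and $b=\sum_{j}b_{j}$, the usual bookkeeping yields $\|g\|_{L_{\infty}}\leq N\alpha$, $\|g\|_{L_{1}}\leq\|f\|_{L_{1}}$, $\sum_{j}|Q_{j}|\leq\alpha^{-1}\|f\|_{L_{1}}$, and $\|b_{j}\|_{L_{1}}\leq N\alpha|Q_{j}|$. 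For the good part, the sandwich $\|g\|_{L_{p_{0}}}^{p_{0}}\leq\|g\|_{L_{\infty}}^{p_{0}-1}\|g\|_{L_{1}}$ combined with \eqref{p0 conti} and Chebyshev's inequality gives $|\{|\cG g|>\alpha/2\}|\leq N\alpha^{-1}\|f\|_{L_{1}}$. For the bad part, with $Q_{j}^{*}$ an appropriate parabolic dilation of $Q_{j}$, the set $\bigcup_{j}Q_{j}^{*}$ has measure at most $N\alpha^{-1}\|f\|_{L_{1}}$ by packing, while outside $Q_{j}^{*}$ the cancellation $\int b_{j}=0$ permits writing $\cG b_{j}(t,x)=\int_{Q_{j}}[K(t,x,s,y)-K(t,x,s_{j},y_{j})]b_{j}(s,y)\,dsdy$ for any fixed $(s_{j},y_{j})\in Q_{j}$. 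Chebyshev and Fubini then reduce everything to the parabolic H\"{o}rmander inequality \eqref{key:hormander}, namely
\[
\sup_{(s,y),(r,z)\in Q}\int_{(Q^{*})^{c}}|K(t,x,s,y)-K(t,x,r,z)|\,dxdt\leq C,
\]
uniformly over $Q\in\bigcup_{m}\mathbb{Q}_{m}$.

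This H\"{o}rmander bound is the main obstacle. The plan is to split the difference as $[K(t,x,s,y)-K(t,x,s,z)]+[K(t,x,s,z)-K(t,x,r,z)]$ and to partition $(Q^{*})^{c}$ into the temporal-future region $\{t>t_{*}\}$, where $t_{*}$ is the upper time-endpoint of $Q^{*}$, and its complement (the past region contributes nothing since $K(t,x,s,y)=K(t,x,s,y)1_{t>s}$). On $\{t>t_{*}\}$, Assumption \ref{as 1}(i) with $a=t_{*}$ controls the first piece by $\varphi(|y-z|/(t_{*}-s)^{1/\gamma})$, and (ii) with $a=t_{*}$ and $b=s\vee r$ controls the second by $\varphi(|s-r|/(t_{*}-(s\vee r)))$; since $Q$ has parabolic aspect ratio one and $Q^{*}$ is a fixed dilation, both arguments of $\varphi$ are bounded by constants depending only on $d$ and $\gamma$. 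On the complementary region $\{t\leq t_{*}\}\cap(Q^{*})^{c}$, which forces $x$ to be spatially far from $Q^{*}$, I would bound $|K-K'|\leq|K|+|K'|$ and apply (iii) to each summand with $b=t_{*}$ and $\rho$ equal to the spatial gap between $Q^{*}$ and the source point, again producing $\varphi$ of a bounded quantity. The delicate calibration is choosing the dilation factor defining $Q^{*}$ so that the three arguments $|y-z|/(t_{*}-s)^{1/\gamma}$, $|s-r|/(t_{*}-(s\vee r))$, and $(t_{*}-s)^{1/\gamma}/\rho$ are \emph{simultaneously} bounded; this is exactly why the filtration must respect the $\gamma$-parabolic scaling built in Section~3, and is the point where an irrational $\gamma$ makes the construction subtle.
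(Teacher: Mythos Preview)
Your proposal is correct and follows essentially the same route as the paper. The paper verifies the parabolic H\"ormander condition exactly as you outline (Lemma~\ref{main result2}): it fixes $Q^{*}=[t_{0},t_{0}+4\cdot 2^{-m\gamma})\times\prod_{j}[-2\cdot 2^{-m},2\cdot 2^{-m})$, splits $(Q^{*})^{c}$ into the future slab $\Gamma_{1}=\{t\geq t_{0}+4\cdot 2^{-m\gamma}\}\times\fR^{d}$ and the spatially-far strip $\Gamma_{2}$, handles $\Gamma_{1}$ via Assumption~\ref{as 1}(i)--(ii) after the same telescoping $K(t,x,s,y)-K(t,x,r,z)=[K(t,x,s,y)-K(t,x,s,z)]+[K(t,x,s,z)-K(t,x,r,z)]$, and treats $\Gamma_{2}$ by the triangle inequality and~(iii). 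The only cosmetic difference is that the paper then invokes Krylov's abstract Calder\'on--Zygmund theorem (Theorem~\ref{thm:main result1}) rather than writing out the stopping-time decomposition you sketch; your explicit good/bad argument is precisely what that cited theorem encapsulates. (Minor slip: in the paper's convention a cube in $\mathbb{Q}_{m}$ has spatial side $2^{-m}$ and temporal side $\sim 2^{-m\gamma}$, not $2^{m}$ and $2^{m\gamma}$, but this affects nothing.)
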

\begin{thm}
            \label{main 22}
In addition to assumptions of Theorem \ref{main 11},
suppose $K(t,s,x,y)$ depends only on $(t,s,x-y)$,
and for all $t>s$ and $f \in C_0^\infty$
\begin{align}
                                    \label{pq con}
\Big\|\int_{\fR^d}K(t,x,s,y)f(y)dy \Big\|_{L_{p_0}} \leq \varphi(t-s) \|f\|_{L_{p_0}(\fR^d)}.
\end{align}
Then it holds that
$$
\|\cG f \|_{L_p(\fR,L_{p_0})}  \leq N \| f \|_{L_p(\fR,L_{p_0})}, \quad \forall f \in C_0^\infty(\fR, L_{p_0}),
$$
where $N$ depends only on $d$, $p, p_0, \gamma, N_0$, and the function $\varphi$.
\end{thm}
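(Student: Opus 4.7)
The plan is to view $\cG$ as a Banach-space-valued singular integral operator in the single time variable $t\in\fR$, acting on functions with values in $L_{p_0}(\fR^d)$, and to apply a Benedek--Calder\'on--Panzone type argument. Since $K(t,x,s,y) = K(t,s,x-y)$ is translation invariant in $x$, for $t>s$ the spatial operator
\[
\cK(t,s)g(x) \;:=\; \int_{\fR^d} K(t,s,x-y)\,g(y)\,dy
\]
is a bounded map $L_{p_0}(\fR^d)\to L_{p_0}(\fR^d)$, and $\cG f(t,\cdot) = \int_{\fR} \cK(t,s) f(s,\cdot)\,ds$. Thus $\cG$ becomes a one-dimensional integral operator with operator-valued kernel $\{\cK(t,s)\}_{t>s}$.

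Three ingredients are required. First, the endpoint boundedness on $L_{p_0}(\fR, L_{p_0}) = L_{p_0}(\fR^{d+1})$ is exactly the $p=p_0$ case of Theorem \ref{main 11}. Second, hypothesis (\ref{pq con}) gives the pointwise operator bound $\|\cK(t,s)\|_{L_{p_0}\to L_{p_0}} \leq \varphi(t-s)$. Third, a one-dimensional H\"ormander condition
\[
\int_{|t-s|\geq 2|s-r|} \|\cK(t,s) - \cK(t,r)\|_{L_{p_0}\to L_{p_0}}\,dt \;\leq\; C,
\]
uniform in $s,r$, is needed. Because $\cK(t,s) - \cK(t,r)$ is convolution in $x$ with $K(t,s,\cdot)-K(t,r,\cdot)$, Young's convolution inequality bounds its $L_{p_0}\to L_{p_0}$ operator norm by the $L_1(\fR^d)$-norm of this difference. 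Applying (\ref{as 1 2}) with $a = s + 2|s-r|$ and $b = s\vee r$ gives $(a-b)\geq |s-r|$, hence the $t$-integral is bounded by $\varphi(1)$, so the H\"ormander condition holds.

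With these three ingredients, the next step is to execute a Calder\'on--Zygmund decomposition in the single variable $t$ of a function $f\in L_1(\fR,L_{p_0})\cap L_{p_0}(\fR,L_{p_0})$ at an arbitrary level $\alpha>0$, yielding a good/bad split whose bad part is supported on a disjoint union of intervals. The good part is handled by the $L_{p_0}(\fR,L_{p_0})$-bound and the bad part by the operator-valued H\"ormander condition, exactly as in the scalar setup used to prove Theorem \ref{main 11}. This yields the weak-type estimate
\[
\alpha\,\big|\{ t\in\fR : \|\cG f(t,\cdot)\|_{L_{p_0}(\fR^d)} > \alpha\}\big| \;\leq\; N\,\|f\|_{L_1(\fR,L_{p_0})}.
\]
Marcinkiewicz interpolation with the strong $L_{p_0}$-endpoint then produces the claimed inequality for every $p\in(1,p_0]$. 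For $p\in(p_0,\infty)$, one runs the same argument on the formal adjoint $\cG^{*}$, whose kernel $\overline{K(s,y,t,x)}$ inherits the symmetric version of Assumption \ref{as 1} (roles of source and target variables, and of (\ref{as 1 1}) and (\ref{as 1 2}), being swapped), and then dualizes via $L_{p_0}$ self-duality.

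The main obstacle will be the bookkeeping required to port the Calder\'on--Zygmund decomposition on $\fR$ to $L_{p_0}(\fR^d)$-valued $L_1$ functions, and to check that Assumption \ref{as 1} for $K$ truly implies the analogous conditions for the adjoint kernel needed in the duality step for $p>p_0$. Both tasks are standard in principle, and the essential analytic content is already captured by (\ref{as 1 2}) together with (\ref{pq con}) in precisely the form demanded by the vector-valued H\"ormander condition.
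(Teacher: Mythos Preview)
Your approach is essentially the same as the paper's: treat $\cG$ as a one-dimensional singular integral operator in $t$ with operator-valued kernel $\cK(t,s)\in L(L_{p_0})$, deduce the operator-valued H\"ormander condition from Young's inequality and Assumption~\ref{as 1}(ii), and combine this with the $L_{p_0}(\fR,L_{p_0})=L_{p_0}(\fR^{d+1})$ bound coming from Theorem~\ref{main 11}. The paper packages exactly this through Theorem~\ref{thm:main result1} applied with $n=1$, $F=G=L_{p_0}(\fR^d)$, and the dyadic filtration $\mathbb{Q}^{\mathrm{time}}_m=\{[4^{-m}i,4^{-m}(i+1)):i\in\bZ\}$ (this is Lemma~\ref{main result3}); you instead phrase it as a Benedek--Calder\'on--Panzone argument with the classical H\"ormander integral condition. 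These are the same proof.

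One correction: your last paragraph about $p\in(p_0,\infty)$ via duality is unnecessary and should be dropped. The hypotheses of Theorem~\ref{main 22} inherit $1<p\leq p_0$ from Theorem~\ref{main 11}, so only the range $(1,p_0]$ is claimed; the paper obtains the full range $p,q\in(1,\infty)$ later, in the proof of Theorem~\ref{main appl}, by running the whole machine on the time-reversed symbol $\psi(-r,\xi)$ rather than by checking that the adjoint kernel satisfies Assumption~\ref{as 1}. (Also, ``$L_{p_0}$ self-duality'' is only correct when $p_0=2$.) So the ``main obstacle'' you flag does not arise for this theorem.
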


%\begin{defn}
For any $p,q \in (1,\infty)$, by $\bH_{q,p}^{1,\gamma}=\bH_{q,p}^{1,\gamma}(\mathbf{R}^{d+1})$ we denote the space of distributions $u$ such that
$$
\|u\|_{L_{q}(\mathbf{R},L_{p})}<\infty,\quad \|u_{t}\|_{L_{q}(\mathbf{R},L_{p})}<\infty,\quad \|(-\Delta)^{\gamma/2}u\|_{L_{q}(\mathbf{R},L_{p})}<\infty.
$$
The norm of $u\in \bH_{q,p}^{1,\gamma}$ is defined by
$$
\|u\|_{\bH_{q,p}^{1,\gamma}}:=\|u\|_{L_{q}(\mathbf{R},L_{p})}+\|u_{t}\|_{L_{q}(\mathbf{R},L_{p})}+\|(-\Delta)^{\gamma/2}u\|_{L_{q}(\mathbf{R},L_{p})}.
$$
%\end{defn}
One can easily check that $\bH_{q,p}^{1,\gamma}$ is a Banach space.

Recall that the operator
$\mathcal{A}(t)$ has the  symbol $\psi(t,\xi)$, that is for $f \in C_0^\infty(\fR^{d+1})$
$$
\cF(\cA f) = \psi(t,\xi) \cF(u)(t,\xi).
$$
The following result is an application of Theorem \ref{main 22}. In the proof of Theorem \ref{main appl} we will take
$$
K(t,x,s,y)
=1_{s<t}\mathcal{F}^{-1} \left\{ |\xi|^{\gamma} \exp\left(\int_{s}^{t} \psi(r,\xi) dr\right) \right\}(x-y)
$$
so that (\ref{eqn 8.14}) holds for $\lambda=0$.

\begin{thm}
                    \label{main appl}
Let $p,q\in(1,\infty)$, $\lambda > 0$. Suppose that there exist constants $\gamma, \kappa>0$ so that
\begin{equation}
\Re[\psi(t,\xi)]\leq-\kappa|\xi|^{\gamma},\quad\xi\in\mathbf{R}^{d}\label{ellipticity}
\end{equation}
and for  all multi-index $\alpha$, $|\alpha|\leq\lfloor\frac{d}{2}\rfloor+1$,
\begin{equation}
|D_{\xi}^{\alpha}\psi(t,\xi)|\leq\kappa^{-1}|\xi|^{\gamma-|\alpha|},
\quad\xi\in\mathbf{R}^{d}\setminus\{0\}.\label{differentiability}
\end{equation}
 Then
for any $f\in L_{q}(\mathbf{R},L_{p})$, there exists a unique solution $u\in \bH_{q,p}^{1,\gamma}$ to equation \eqref{eqn 8.13}.
Furthermore, for this solution we have
\begin{align*}
\|u_{t}\|_{L_{q}(\mathbf{R},L_{p})}+\|(-\Delta)^{\gamma/2}u\|_{L_{q}(\mathbf{R},L_{p})}+\lambda\|u\|_{L_{q}(\mathbf{R},L_{p})}\leq \|f\|_{L_{q}(\mathbf{R},L_{p})},
\end{align*}
where $N=N(d,p,q,\kappa,\gamma)$.
\end{thm}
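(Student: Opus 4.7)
Following the roadmap in the introduction, the plan is to produce a Fourier representation of the candidate solution, extract the kernel of the operator $f \mapsto (-\Delta)^{\gamma/2}u$, verify Assumption~\ref{as 1} for it, and invoke Theorem~\ref{main 22}. For $f \in C_0^\infty(\fR^{d+1})$ the natural candidate is
$$
\hat{u}(t,\xi) = \int_{-\infty}^t \exp\!\Bigl(\int_s^t \psi(r,\xi)\,dr - \lambda(t-s)\Bigr) \hat{f}(s,\xi)\,ds,
$$
so that $(-\Delta)^{\gamma/2}u = \cG_\lambda f$ with kernel $K_\lambda(t,x,s,y) = e^{-\lambda(t-s)} K(t,x,s,y)$, where $K$ is the $\lambda=0$ kernel written out in the excerpt. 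Because $e^{-\lambda(t-s)} \leq 1$, every estimate for $K_\lambda$ follows from the corresponding one for $K$, with the additional decay used only to close the $\lambda$-bound on $u$ itself.

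\textbf{The $\lambda$-bound.} Setting $p_{s,t}(x) := e^{-\lambda(t-s)} \cF^{-1}\{e^{\int_s^t \psi(r,\cdot)\,dr}\}(x)$, (\ref{ellipticity}) gives $|\hat p_{s,t}(\xi)| \leq e^{-\kappa(t-s)|\xi|^\gamma - \lambda(t-s)}$, and (\ref{differentiability}) combined with the Sobolev-type embedding $\|g\|_{L_1(\fR^d)} \leq N \|(1+|x|^{\lfloor d/2\rfloor+1})g\|_{L_2}$ (moving the weight onto $\xi$-derivatives via Plancherel) yields $\|p_{s,t}\|_{L_1(\fR^d)} \leq N e^{-\lambda(t-s)}$ uniformly in $s<t$. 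Young's inequality in $x$ followed by Young in $t$ against $e^{-\lambda\tau}$ then produces $\lambda\|u\|_{L_q(\fR,L_p)} \leq N\|f\|_{L_q(\fR,L_p)}$.

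\textbf{Verifying Assumption~\ref{as 1} is the core of the argument.} Because $K$ depends only on $x-y$ and the multiplier $|\xi|^\gamma e^{\int_s^t \psi\,dr}$ enjoys the parabolic scaling $(t,x) \mapsto ((t-s)/(a-s),(x-y)/(a-s)^{1/\gamma})$, each of (\ref{as 1 1})--(\ref{as 1 3}) reduces to an $L_1(\fR^d)$-bound on an inverse Fourier transform, which I would control by a weighted $L_2$-bound on $\xi$-derivatives of the symbol via Plancherel plus Sobolev embedding. For the spatial difference (\ref{as 1 1}) I would write $K(\cdot,y)-K(\cdot,z) = \int_0^1 (y-z)\cdot \nabla_y K(\cdot, z+\theta(y-z))\,d\theta$ and estimate $\nabla_y K$; for the time difference (\ref{as 1 2}) I would use $\partial_r\exp(\int_r^t \psi(\rho,\xi)\,d\rho) = -\psi(r,\xi)\exp(\int_r^t \psi(\rho,\xi)\,d\rho)$ and absorb the extra factor $|\psi(r,\xi)| \leq \kappa^{-1}|\xi|^\gamma$ into the exponential decay produced by the outer integration in $t$ over $[a,\infty)$; for the tail (\ref{as 1 3}) I would transfer a weight $|x-y|^{d+1}$ into $\xi$-derivatives. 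Crucially, at most $\lfloor d/2\rfloor+1$ derivatives of $\psi$ in $\xi$ are ever needed, matching exactly hypothesis (\ref{differentiability}).

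\textbf{Final assembly and the main obstacle.} Plancherel and $\int_0^\infty |\xi|^\gamma e^{-\kappa s|\xi|^\gamma}\,ds = \kappa^{-1}$ give $\|\cG f\|_{L_2(\fR^{d+1})} \leq \kappa^{-1}\|f\|_{L_2}$, verifying (\ref{p0 conti}) at $p_0 = 2$. Theorem~\ref{main 11} then yields $L_p(\fR^{d+1})$-boundedness of $\cG$ for $1 < p \leq 2$; since the adjoint kernel $K^*(t,x,s,y) = \overline{K(s,y,t,x)}$ corresponds to a reversed, conjugated symbol still satisfying (\ref{ellipticity})--(\ref{differentiability}), duality extends boundedness to $2 \leq p < \infty$. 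Thus (\ref{p0 conti}) holds for arbitrary $p_0 \in (1,\infty)$, while (\ref{pq con}) with $p_0 = p$ follows from Young's inequality combined with the $L_1$-bound on $K(t,\cdot,s,0)$ implicit in (\ref{as 1 3}). Theorem~\ref{main 22} with $p_0 = p$ then produces $\|(-\Delta)^{\gamma/2}u\|_{L_q(\fR,L_p)} \leq N\|f\|_{L_q(\fR,L_p)}$. The $\cA u$-bound follows because $\psi(t,\xi)/|\xi|^\gamma$ satisfies Mihlin's multiplier condition uniformly in $t$ by a direct Leibniz computation from (\ref{differentiability}); the $u_t$-bound then comes from the equation itself. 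Uniqueness is immediate from the a priori estimate; existence follows from the explicit Fourier formula for $f \in C_0^\infty$ followed by density. \emph{The main obstacle I expect} is the verification of Assumption~\ref{as 1}: because $\psi$ is only measurable in $t$, one must work exclusively with the time-integrated symbol $\exp(\int_s^t \psi\,dr)$ and carefully exploit the dissipative structure to avoid ever differentiating in $t$.
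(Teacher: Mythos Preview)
Your outline matches the paper's proof almost step for step: the same kernel $K$, the same route through Assumption~\ref{as 1} and Theorems~\ref{main 11}--\ref{main 22}, the same $L_2$ base case (the paper takes the Fourier transform in $t$ rather than invoking Young's inequality, but your argument is equally valid), the same duality to reach $p>2$, and the same Mihlin-plus-density closure.

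There is, however, one concrete step that would fail as written. For the tail bound \eqref{as 1 3} you propose to ``transfer a weight $|x-y|^{d+1}$ into $\xi$-derivatives.'' Any such transfer---whether via pointwise integration by parts or via the weighted-$L_2$/Plancherel trick---costs at least $d+1$ derivatives of the symbol, which for every $d\geq 1$ exceeds the $\lfloor d/2\rfloor+1$ derivatives granted by \eqref{differentiability}. This directly contradicts your own claim that ``at most $\lfloor d/2\rfloor+1$ derivatives of $\psi$ in $\xi$ are ever needed.'' The paper's fix is to use a \emph{small} moment instead: one shows
\[
\int_{\fR^d}|x|^{\mu}|K(t,s,x)|\,dx \leq N(t-s)^{\mu/\gamma-1}
\quad\text{for any }0\leq \mu<\min\{\gamma,\ \lfloor d/2\rfloor+1-\tfrac{d}{2}\},
\]
via the weighted-$L_2$ argument with $\sigma\in(\mu+\tfrac{d}{2},\,\lfloor d/2\rfloor+1]$ derivatives, and then combines Chebyshev with the time integration $\int_s^b (t-s)^{\mu/\gamma-1}\,dt = N(b-s)^{\mu/\gamma}$ to obtain \eqref{as 1 3} with $\varphi(r)=Nr^{\mu}$. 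The small exponent $\mu$ is what keeps the derivative count within budget while still making the $t$-integral converge.

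A smaller omission: Theorem~\ref{main 22} with $p_0=p$ only delivers the $L_q(\fR,L_p)$ bound for $1<q\leq p$. For $q>p$ you must repeat the duality argument (now in the mixed norm $L_{q'}(\fR,L_{p'})$ with $q'<p'$), exactly as the paper does; your write-up stops one step short.
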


 Below we introduce  some examples related to  conditions (\ref{ellipticity}) and (\ref{differentiability}).

\begin{example}
                      \label{ex 1}
 The symbol of the $2m$-order operator
$$
A_1(t)u:=(-1)^{m-1}\sum_{|\alpha|=|\beta|=m}a^{\alpha \beta}(t)D^{\alpha+\beta}u,
$$
is $\psi(t,\xi)=-a^{\alpha \beta}(t) \xi^\alpha \xi^\beta$. Hence (\ref{ellipticity}) and (\ref{differentiability})  are satisfied if $a^{\alpha \beta}(t)$  are bounded complex-valued measurable functions satisfying
\begin{align*}
\kappa |\xi|^{2m} \leq  \sum_{|\alpha|=|\beta|=m}  \xi^\alpha \xi^\beta \Re\left[a^{\alpha \beta}(t)\right], \quad \forall \xi\in \fR^d.
\end{align*}

(ii) Similarly the $\gamma$-order nonlocal operator
$$
 \quad A_2(t):=-a(t)(-\Delta)^{\gamma/2}, \quad \quad \gamma\in (0,\infty)
$$
has symbol $\psi(t,\xi)=-a(t)|\xi|^{\gamma}$ and therefore for the above conditions it is sufficient to have
$$
\kappa<\Re[a(t)], \quad |a(t)|\leq \kappa^{-1}.
$$
\end{example}

The operator in Example \ref{ex 3} below is  considered in \cite{Mikulevivcius1992}.
\begin{example}
          \label{ex 3}
 Fix $\gamma\in (0,2)$ and  denote
$$
 \cA u:=\int_{\fR^d \setminus \{0\}} \Big(u(t,x+y)-u(t,x)-\chi(y)(\nabla u (t,x),y) \Big)
\frac{m(t,y) }{|y|^{d+\gamma}}dy
$$
where $\chi(y)= I_{\gamma >1} + I_{|y|\leq1}I_{\gamma=1}$. Then $\cA$ satisfies (\ref{ellipticity}) and (\ref{differentiability})  if $m(t,y)\geq 0$ is a  measurable function satisfying  the following  (see \cite{Kim2014BMOpseudo} for  details):

(1) If $\gamma=1$ then
$$
\int_{\partial B_1} w m(t,w)~S_1(dw)=0, \quad \forall t >0,
$$
where  $\partial B_{1}$ is the unit sphere in $\fR^d$ and $S_{1}(dw)$ is the surface measure on it.

  (2) The function $m=m(t,y) $ is zero-order homogeneous and $\lfloor\frac{d}{2}\rfloor+1$-times differentiable in $y$.

(3)   There is a constant $K$ such that for each  $t \in \fR$
$$
\sup_{|\alpha| \leq d_0, |y|=1} |D^\alpha_y m^{(\alpha)} (t,y) | \leq K.
$$

  (4) There  exists a constant $c>0$ so that $m(t,y)>c$ on a set  $E\subset \partial B_1$ of positive  $S_1(dw)$-measure.
\end{example}

Next we discuss the issue regarding the compositions and powers of operators.
Let $\cA_1(t)$ and $\cA_2(t)$ be linear operators with symbols $\psi_1(t)$ and  $\psi_2(t)$ satisfying the above prescribed conditions, that is there exist constants  $\gamma_1, \gamma_2, \kappa_1, \kappa_2>0$ so that
$$
      \Re [-\psi_i(t,\xi)]  \geq  \kappa_i |\xi|^{\gamma_i},\quad
 |D^\alpha \psi_i(t,\xi)| \leq \kappa_i^{-1} |\xi|^{\gamma_i -|\alpha|}, \quad (i=1,2),
 $$
 for any multi-index $\alpha$, $|\alpha|\leq \lfloor \frac{d}{2} \rfloor+1$. Fix $a,b>0$, and denote $\gamma:=a\gamma_1+b\gamma_2$. Consider $\gamma$-order  operator
 $$
 \cA_{a,b}(t)=-(-\cA_1(t))^a(-\cA_2(t))^b
 $$
 with the symbol $\psi=-(-\psi_1)^a(-\psi_2)^b$. It is easy to check that there exists a constant $N>0$ so that  for any multi-index $\alpha$, $|\alpha|\leq \lfloor \frac{d}{2} \rfloor+1$,
  $$
 |D^{\alpha} \psi (t,\xi)| \leq N |\psi|^{\gamma-\alpha}, \quad \quad  \xi \in \fR^d\setminus\{0\}.
$$
Therefore, (\ref{differentiability}) is satisfied, and Theorem \ref{main appl} is applicable to $\cA_{a,b}(t)$ if
 \begin{equation}
            \label{eqn 7.19}
  \Re [-\psi(t,\xi)]=\Re[(-\psi_1)^a(-\psi_2)^b]\geq N^{-1} |\xi|^{\gamma}, \quad \forall \, \xi\in \fR^d.
 \end{equation}
 Obviously (\ref{eqn 7.19}) is satisfied if, for instance, the symbols $\psi_i(t,\xi)$ are real-valued.

\mysection{Filtration of Partitions}

In this section we introduce a version of  Calder\'on-Zygmund theorem  we need. We also construct a filtration of partitions suitable for our pseudo-differential operators. Denote $\mathbb{N}=\{1,2,\cdots\}$ and $\mathbb{Z}=\{0, \pm 1, \pm 2, \cdots\}$.

\begin{defn}
                                         \label{def:filtraion}
Let $n\in \mathbb{N}$  and
$(\mathbb{Q}_{m},m\in\mathbb{Z})$ be a sequence of partitions of
$\mathbf{R}^{n}$ each consisting of disjoint bounded Borel subsets $Q\in\mathbb{Q}_{m}$.
We call $(\mathbb{Q}_{m},m\in\mathbb{Z})$ a \emph{filtration of partitions}
if
\begin{enumerate}
\item[(i)]  the partitions become finer as $m$ increases:
$$
\inf_{Q\in\mathbb{Q}_{m}}|Q|\rightarrow\infty\ \mbox{as}\ m\rightarrow-\infty,\quad\sup_{Q\in\mathbb{Q}_{m}}\textnormal{diam }|Q|\rightarrow0\ \mbox{as}\ m\rightarrow\infty ;
$$

\item[(ii)]  the partitions are nested: for each $m$ and $Q\in\mathbb{Q}_{m}$
there is a (unique) set $Q'\in\mathbb{Q}_{m-1}$ such that $Q\subset Q'$;

\item[(iii)]  the following regularity property holds: for $Q$ and $Q'$ as in
(ii) we have $|Q'|\leq N_{0}|Q|$, where $N_{0}$ is a constant independent
of $m,Q,Q'$.
\end{enumerate}
\end{defn}

\begin{example}
                              \label{ex2)parabolic}
For the second-order parabolic equations, $\mathbb{Q}_m$ on $\fR^{d+1}$ is typically defined by
$$
\mathbb{Q}_{m}=\{[i_{0}4^{-m},(i_{0}+1)4^{-m})\times Q_{m}(i_{1},\ldots,i_{d}),i_{0},i_{1},\ldots,i_{d}\in\mathbb{Z}\},
$$
where
$$
Q_m(i_1,\ldots, i_d):=[i_{1}2^{-m},(i_{1}+1)2^{-m})\times \cdots \times [i_{d}2^{-m},(i_{d}+1)2^{-m}).
$$
\end{example}

For Banach spaces $F$ and $G$,
$L(F,G)$ is the space of bounded linear operators from $F$ to $G$, and $L(F):=L(F,F)$. Define $B_{r}^{c}(x):=\{y\in\mathbf{R}^{n}:|x-y|\geq r\}$.
\begin{defn}
			\label{CZ kernel}
Let $(\mathbb{Q}_{m},m\in\mathbb{Z})$ be a filtration of partitions,
and for each $x,y\in\mathbf{R}^{n}$, $x\neq y$, let $K(x,y)$ be
a bounded operator from $F$ into $G$. We say that $K$ is a \emph{Calder\'on-Zygmund
kernel} relative to $(\mathbb{Q}_{m},m\in\mathbb{Z})$ if
\begin{enumerate}
\item[(i)]  there is a number $p_{0}\in(1,\infty)$ such that, for any $x$
and any $r>0$, $K(x,\cdot)\in L_{p_{0},loc}(B_{r}^{c}(x),L(F,G))$;
\item[(ii)]  for every $y\in\mathbf{R}^{n}$ the function $|K(x,y)-K(x,z)|$
is measurable as a function of $(x,z)$ on the set $\mathbf{R}^{2n}\cap\{(x,z):x\neq z,x\neq y\}$;
\item[(iii)]  there is a constant $N_{0}\geq1$ and, for each $Q\in\bigcup_{m\in\mathbb{Z}}\mathbb{Q}_{m}$,
there is a closed set $Q^{*}$ with the properties $\bar{Q}\subset Q^{*}$,
$|Q^{*}|\leq N_{0}|Q|$, and
\begin{equation}
\int_{\mathbf{R}^{n}\setminus Q^{*}}|K(x,y)-K(x,z)|dx\leq N_{0}\label{eq:KrylovHormanderCondition}
\end{equation}
whenever $y,z\in Q$.
\end{enumerate}
\end{defn}

 The following version of   the Calder\'on-Zygmund theorem is taken from \cite{krylov2001calderon}.

\begin{thm}
                                      \label{thm:main result1}
 Let $p>1$ and  $A:L_p(\mathbf{R}^{n},F)\rightarrow L_p(\mathbf{R}^{n},G)$ be a bounded linear operator. Assume that if $f\in C_{0}^{\infty}(\mathbf{R}^{n},F)$
then for almost any $x$ outside the support of $f$ we
have
$$
Af(x)=\int_{\mathbf{R}^{n}}K(x,y)f(y)dy
$$
where $K(x,y)$ is a Calder\'on-Zygmund kernel relative to a  filtration
of partitions. Then the operator $A$ is uniquely extendable to a
bounded operator from $L_{q}(\mathbf{R}^{n},F)$ to $L_{q}(\mathbf{R}^{n},G)$
for any $q\in(1,p]$, and $A$ is of weak type $(1,1)$ on smooth
functions with compact support.
\end{thm}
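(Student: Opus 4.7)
The plan is to establish the weak type $(1,1)$ estimate on $C_0^\infty(\mathbf{R}^n,F)$ via a Calder\'on--Zygmund decomposition adapted to the filtration $(\mathbb{Q}_m)$, and then to obtain strong $(q,q)$ boundedness for every $q\in(1,p)$ by Marcinkiewicz interpolation with the given $(p,p)$ bound; density of $C_0^\infty$ in $L_q$ provides the unique bounded extension to $L_q(\mathbf{R}^n,F)\to L_q(\mathbf{R}^n,G)$, while $q=p$ is part of the hypothesis.

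Fix $f\in C_0^\infty(\mathbf{R}^n,F)$ and $\alpha>0$, and define $f_m(x):=|Q|^{-1}\int_Q f(y)\,dy$ for the unique $Q\in\mathbb{Q}_m$ containing $x$. Since $f$ is continuous with compact support, the shrinking-diameter condition (i) forces $f_m(x)\to f(x)$ for every $x$ as $m\to\infty$, while (i) also forces $\|f_m(x)\|_F\to 0$ as $m\to-\infty$. For each $x$ let $m(x)$ be the smallest $m\in\mathbb{Z}$ for which the average of $\|f\|_F$ over the cube of $\mathbb{Q}_m$ containing $x$ exceeds $\alpha$; if no such $m$ exists, $x$ belongs to the good set. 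The nestedness (ii) together with the minimality of $m(x)$ makes the stopping cubes $\{Q_k\}$ pairwise disjoint, and regularity (iii) applied to the parent $Q_k'\in\mathbb{Q}_{m(x)-1}$ yields
$$
\alpha\ <\ |Q_k|^{-1}\int_{Q_k}\|f\|_F\,dy\ \leq\ N_0\alpha.
$$
Write $f=g+b$ with $g:=f\mathbf{1}_{\mathbf{R}^n\setminus\bigcup_k Q_k}+\sum_k f_{Q_k}\mathbf{1}_{Q_k}$ and $b:=\sum_k b_k$, where $b_k:=(f-f_{Q_k})\mathbf{1}_{Q_k}$ has zero mean, $\|g\|_F\leq N_0\alpha$ a.e., and $\|g\|_{L_1}\leq\|f\|_{L_1}$.

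The good part is handled by the assumed strong $(p,p)$ bound: $\|g\|_{L_p}^p\leq(N_0\alpha)^{p-1}\|g\|_{L_1}\leq(N_0\alpha)^{p-1}\|f\|_{L_1}$ and Chebyshev give $|\{\|Ag\|_G>\alpha/2\}|\leq N\alpha^{-1}\|f\|_{L_1}$. For the bad part, set $\Omega:=\bigcup_k Q_k^*$; regularity $|Q_k^*|\leq N_0|Q_k|$ combined with $\alpha|Q_k|\leq\int_{Q_k}\|f\|_F\,dy$ gives $|\Omega|\leq N\alpha^{-1}\|f\|_{L_1}$. Fixing any $z_k\in Q_k$, for $x\notin Q_k^*$ the vanishing mean of $b_k$ permits
$$
Ab_k(x)=\int_{Q_k}\bigl(K(x,y)-K(x,z_k)\bigr)b_k(y)\,dy,
$$
and integrating in $x\in\mathbf{R}^n\setminus Q_k^*$, applying Fubini and the H\"ormander condition \eqref{eq:KrylovHormanderCondition}, yields $\int_{\mathbf{R}^n\setminus Q_k^*}\|Ab_k(x)\|_G\,dx\leq N_0\|b_k\|_{L_1}\leq 2N_0\int_{Q_k}\|f\|_F\,dy$. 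Summing in $k$ and applying Chebyshev on $\mathbf{R}^n\setminus\Omega$ bounds $|\{\|Ab\|_G>\alpha/2\}\setminus\Omega|$ by $N\alpha^{-1}\|f\|_{L_1}$. Adding the three contributions completes the weak $(1,1)$ bound.

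The step I expect to be the most delicate is the stopping-time construction on the general (not necessarily dyadic-cubical) filtration: the regularity axiom (iii) is used in three essential places---to obtain the upper bound $N_0\alpha$ on the average of $\|f\|_F$ over each stopping cube $Q_k$, to control $|\Omega|=|\bigcup_k Q_k^*|$, and implicitly in the H\"ormander condition via the dilates $Q^*$. Once this skeleton is in place the rest mirrors the classical Calder\'on--Zygmund argument, since \eqref{eq:KrylovHormanderCondition} is formulated precisely for pairs of points inside a single filtration cube, which matches the output of the stopping procedure.
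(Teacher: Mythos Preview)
The paper does not prove this theorem; it is quoted verbatim from Krylov \cite{krylov2001calderon} and used as a black box. Your outline is exactly the argument Krylov gives there: a martingale/stopping-time Calder\'on--Zygmund decomposition relative to the filtration $(\mathbb{Q}_m)$, the $L_p$ bound handling the good part, the H\"ormander condition \eqref{eq:KrylovHormanderCondition} handling the bad part off $\bigcup_k Q_k^*$, and Marcinkiewicz interpolation for $q\in(1,p)$. So the approach is correct and matches the cited source.

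One technical point you glossed over that Krylov does address: the kernel representation $Ab_k(x)=\int_{Q_k}K(x,y)b_k(y)\,dy$ for $x\notin Q_k^*$ is only \emph{assumed} for $f\in C_0^\infty(\mathbf{R}^n,F)$, whereas $b_k=(f-f_{Q_k})\mathbf{1}_{Q_k}$ is merely bounded with compact support. You need to approximate $b_k$ in $L_p$ by smooth functions with support in a fixed neighborhood of $\bar Q_k$ not containing $x$ (possible since $\bar Q_k\subset Q_k^*$ and $Q_k^*$ is closed), and pass to the limit using condition (i) of Definition~\ref{CZ kernel}; this is precisely where the local $L_{p_0}$-integrability hypothesis on $K(x,\cdot)$ enters the proof. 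Without this step the identity you write for $Ab_k(x)$ is not justified.
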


The filtration of partitions in Example \ref{ex2)parabolic} is not appropriate
for  pseudo-differential operators  since the kernels corresponding such operators  do not satisfy \eqref{eq:KrylovHormanderCondition} in the setting of Example \ref{ex2)parabolic}.
Finding an appropriate filtration of partitions requires delicate procedures unless the given order $\gamma$ is rational.
The remaining of this section  is devoted to construct a filtration of partitions for pseudo-differential operators of arbitrary order.

We fix $\gamma >0$ and   denote
$$
\mathbb{Q}_{0}^{(\gamma)}=\{Q_{0}\subset\mathbf{R}^{d+1}:Q_0=[i_{0},i_{0}+1)\times\prod_{j=1}^{d}[i_{j},i_{j}+1),\ i_{0},i_{1},\ldots,i_{d}\in\mathbb{Z}\}.
$$
To construct $\mathbb{Q}_{m}^{(\gamma)}$ we consider the cases $m\geq 0$ and $m<0$ separately.

First let $m=1,2,\cdots$. We construct  $\mathbb{Q}_{m}^{(\gamma)}$ inductively as follows. A similar  division procedure when $\gamma\in (0,2)$ can  be found in \cite{lara2014regularity}.
Suppose for a given $Q_{m-1} \in \mathbb{Q}_{m-1}^{(\gamma)}$, we can write
\begin{align}
				\label{eq:divisionprocedure}
Q_{m-1}=Q_{m-1}^{\textnormal{time}}\times Q_{m-1}^{\textnormal{space}}
\end{align}
where
$$
Q_{m-1}^{\textnormal{time}}=[i_{0}2^{-(m-1)\gamma}\tau_{m-1},(i_{0}+1)2^{-(m-1)\gamma}\tau_{m-1})
$$
$$
Q_{m-1}^{\textnormal{space}}=\prod_{j=1}^{d}[i_{j}2^{-(m-1)},(i_{j}+1)2^{-(m-1)}),
$$
for integers $i_0,i_1,\ldots,i_d$ and $\tau_{m-1} \in [1,2)$ (remember $\tau_{0}=1$).
Put
$$
2^{-(m-1)\gamma}\tau_{m-1}=2^{-m\gamma}\rho_m.
$$
Then
$$
\rho_m = 2^{\gamma} \tau_{m-1} \in [2^\gamma, \,2^{\gamma +1}) \subset  [2^{ \lfloor \gamma \rfloor}, \, 2^{ \lfloor \gamma \rfloor +2}).
$$
Put
$$
k_m=\begin{cases} \lfloor \gamma \rfloor &\text{if} \quad  \rho_m \in [2^{ \lfloor \gamma \rfloor}, \, 2^{ \lfloor \gamma \rfloor +1})\\
\lfloor \gamma \rfloor +1 &\text{if} \quad \rho_m \in [2^{ \lfloor \gamma \rfloor +1}, \, 2^{ \lfloor \gamma \rfloor +2}).
\end{cases}
$$
 We  split $Q_{m-1}^{\textnormal{space}}$
into $2^{d}$ congruent cubes and subdivide $Q_{m-1}^{\textnormal{time}}$
into $2^{k_m}$ congruent intervals. Taking all possible products
of subcubes and subintervals, we obtain the set of \emph{offsprings} of $Q_{m-1}$ (i.e.) $\{Q_{m}:Q_{m}\subset Q_{m-1}\}$ of the form
\begin{align}
				\label{m cube form}
Q_{m}=Q_{m}(i,l)=Q_{m}^{\textnormal{time}}(i)\times Q_{m}^{\textnormal{space}}(l)
\end{align}
where
$$
Q_{m}^{\textnormal{time}}(i)=[i_{0}2^{-m\gamma}\rho_m+(i-1)2^{-m\gamma}\frac{\rho_m}{2^{k_m}},\, i_{0}2^{-m\gamma}\rho_m +i2^{-m \gamma}\frac{\rho_m}{2^{k_m}})
$$
for some $1\leq i\leq2^{k_m}$ and
$$
Q_{m}^{\textnormal{space}}(l)=\prod_{j=1}^{d}[ i_{j}2^{-(m-1)}+(l_{j}-1)2^{-m} ,i_{j}2^{-(m-1)}+l_{j}2^{-m})
$$
for some $l=(l_{1},\ldots,l_{d})$, $l_{j}\in\{1,2\}$. Denoting
$$
\tau_m = \frac{\rho_m}{2^{k_m}}\in [1,2),
$$
 we can rewrite \eqref{m cube form} as
$$
Q_{m}=[\bar{i}_{0}2^{-m\gamma}\tau_m ,( \bar{i}_{0}+1) 2^{-m\gamma}\tau_m)\times \prod_{j=1}^{d}[ \bar{i}_{j}2^{-m} ,( \bar{i}_{j}+1)2^{-m}),
$$
where
$$
\bar{i}_{0}=2^{k_{m}}i_{0}+i-1,\quad \bar{i}_{j}=2i_{j}+l_{j}-1
$$
are integers for each $j=1,\ldots,d$.
Hence collecting all such  $Q_{m}\subset Q_{m-1}$ for every $Q_{m-1}\in\mathbb{Q}_{m-1}^{(\gamma)}$, we finally obtain the partition $\mathbb{Q}_{m}^{(\gamma)}$.  Moreover, since we choose $k_m$ such that
$\tau_m \in [1, 2)$,
by going back to \eqref{eq:divisionprocedure},
we can repeat the division procedure for $\mathbb{Q}_{m}^{(\gamma)}$ and generates $\mathbb{Q}_{m+1}^{(\gamma)}$.

Now we illustrate a merger procedure.
We define the collections of cubes $\mathbb{Q}_{m}^{(\gamma)}$ for $m=-1,-2,\ldots$ inductively.
Suppose that $\mathbb{Q}_{m+1}$ is a partition and every cubes $Q_{m+1}$ in $\mathbb{Q}_{m+1}^{(\gamma)}$ is of the form
\begin{align}
     \label{mer 1 as}
Q_{m+1}=Q_{m+1}^{\textnormal{time}}\times Q_{m+1}^{\textnormal{space}}
\end{align}
where
$$
Q_{m+1}^{\textnormal{time}}=[i_{0}2^{-(m+1)\gamma}\tau_{m+1},(i_{0}+1)2^{-(m+1)\gamma}\tau_{m+1}),
$$
$$
Q_{m+1}^{\textnormal{space}}=\prod_{j=1}^{d}[i_{j}2^{-(m+1)},(i_{j}+1)2^{-(m+1)}),
$$
$\tau_{m+1} \in [1,2)$ and $i_{0},i_1,\ldots,i_d$ are integers.
Obviously, $\mathbb{Q}^{(\gamma)}_0$ satisfies \eqref{mer 1 as} with $\tau_{0}=1$. We put
$$
2^{-(m+1)\gamma}\tau_{m+1}=2^{-m\gamma}\rho_m.
$$
Then
$$
\rho_m = 2^{-\gamma} \tau_{m+1} \in [2^{-\gamma} , 2^{-\gamma +1})\,\subset [2^{ -\lfloor \gamma \rfloor-1} , 2^{ -\lfloor \gamma \rfloor +1}).
$$
Put
$$
k_m=\begin{cases} \lfloor \gamma \rfloor &\text{if} \quad  \rho_m \in [2^{- \lfloor \gamma \rfloor}, \, 2^{ -\lfloor \gamma \rfloor +1})\\
\lfloor \gamma \rfloor +1 &\text{if} \quad \rho_m \in [2^{- \lfloor \gamma \rfloor -1}, \, 2^{ -\lfloor \gamma \rfloor}).
\end{cases}
$$
By combining cubes $Q_{m+1}$ of $\mathbb{Q}^{(\gamma)}_{m+1}$, we compose the partition $\mathbb{Q}_{m}^{(\gamma)}$ with $Q_m$ of the form
$$
Q_{m}=Q_{m}(i,l)=Q_{m}^{\textnormal{time}}(i)\times Q_{m}^{\textnormal{space}}(l)
$$
where
$$
Q_{m}^{\textnormal{time}}(i)=[i 2^{-m\gamma}\rho_m,(i+1) 2^{-m \gamma}\rho_m),
$$
for $i\in 2^{k_{m}}\mathbb{Z}$ and
$$
Q_{m}^{\textnormal{space}}(l)=\prod_{j=1}^{d}[l_{j}2^{-(m+1)},(l_{j}+1)2^{-(m+1)})
$$
for $l_{j}\in 2\mathbb{Z}$, $j=1,\ldots,d$.
Denote $\tau_m = 2^{k_m} \rho_m$. Then we can rewrite $Q_{m}(i,l)$ as
$$
Q_{m}=[\bar{i}_{0}2^{-m\gamma}\tau_m ,( \bar{i}_{0}+1) 2^{-m\gamma}\tau_m)\times \prod_{j=1}^{d}[ \bar{i}_{j}2^{-m} ,( \bar{i}_{j}+1)2^{-m}),
$$
where
$$
\bar{i}_{0}=\frac{i}{2^{k_{m}}},\quad \bar{i}_{j}=\frac{l_{j}}{2}
$$
are integers for each $j=1,\ldots,d$.
Furthermore, due to the choice of $k_m$ and $\rho_m$, we have
$$
\tau_m = 2^{k_m} \rho_m \in [1,2).
$$
Hence $\mathbb{Q}^{(\gamma)}_m$ satisfies \eqref{mer 1 as} with $m$ in place of $m+1$.
By repeating this merger procedure, we construct $\mathbb{Q}_{m}^{(\gamma)}$ for all $m=-1,-2,\ldots$.

\begin{remark}
                                       \label{Rmk : cubic is dyadic}
Due to the above procedure, one can
write $Q\in\mathbb{Q}_{m}^{(\gamma)}$ for $m\in\mathbb{Z}$ as follows
\begin{equation}
Q=Q(i_{0},\ldots,i_{d})=[i_{0}2^{-m\gamma}\tau_{m},(i_{0}+1)2^{-m\gamma}\tau_{m})\times\prod_{j=1}^{d}[i_{j}2^{-m},(i_{j}+1)2^{-m})\label{eq:cubic form}
\end{equation}
where $(i_{0},\ldots i_{d})\in\mathbb{Z}^{d+1}$ and $\tau_{m}\in[1,2)$.
In fact, $2^{-m\gamma}\tau_{m}$ is a dyadic number. Indeed, $\tau_{0}=1$
and recall that $2^{-m\gamma}\tau_{m}=2^{-k_{m}}2^{-(m-1)\gamma}\tau_{m-1}$
for $m=1,2,\ldots$. Therefore,
\begin{align*}
2^{-m\gamma}\tau_{m}=2^{-k_{m}}2^{-(m-1)\gamma}\tau_{m-1} & =2^{-k_{m}}2^{-k_{m-1}}2^{-(m-2)\gamma}\tau_{m-2}\\
 & =\ \cdots\\
 & =2^{-(k_{m}+k_{m-1}+\cdots+k_{2})}2^{-\gamma}\tau_{1} \\
 & =2^{-(k_{m}+k_{m-1}+\cdots+k_{1})}.
\end{align*}
Similarly, for $m=-1,-2,\ldots,$ we have $2^{-m\gamma}\tau_{m}=2^{k_{m}}2^{-(m+1)\gamma}\tau_{m+1}$
so that
$$
2^{-m\gamma}\tau_{m}=2^{k_{m}+k_{m+1}+\cdots+k_{-1}}.
$$
Therefore, $(\mathbb{Q}_{m}^{(\gamma)},m\in\mathbb{Z})$ is constituted of a class of dyadic cubes. If $\gamma=2$, then obviously
the above procedure generates the filtration of partitions in Example \ref{ex2)parabolic}.
\end{remark}

\begin{thm}
              \label{thm 8.19}
$(\mathbb{Q}_{m}^{(\gamma)},m\in\mathbb{Z})$ is a filtration of partitions.
\end{thm}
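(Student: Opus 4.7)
The plan is to verify each of the three defining conditions of a filtration of partitions (Definition 3.1) by exploiting the explicit parametrization of cubes at level $m$ obtained in Remark 3.6. The backbone of the argument is the invariant: every $Q\in \mathbb{Q}_m^{(\gamma)}$ has the form
$$
Q=[i_0 2^{-m\gamma}\tau_m,(i_0+1)2^{-m\gamma}\tau_m)\times\prod_{j=1}^d[i_j 2^{-m},(i_j+1)2^{-m}),
$$
where $(i_0,\ldots,i_d)\in\mathbb{Z}^{d+1}$ and $\tau_m\in[1,2)$ depends only on $m$. Once this is in place, $\mathbb{Q}_m^{(\gamma)}$ is visibly a partition of $\fR^{d+1}$ with $|Q|=2^{-m(\gamma+d)}\tau_m$ and $\mathrm{diam}\,Q=\sqrt{(2^{-m\gamma}\tau_m)^2+d\cdot 2^{-2m}}$.

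I will establish the invariant by induction on $|m|$ starting from $\tau_0=1$. For the division step ($m\geq 1$), one has $\rho_m=2^{\gamma}\tau_{m-1}\in[2^{\gamma},2^{\gamma+1})\subset[2^{\lfloor\gamma\rfloor},2^{\lfloor\gamma\rfloor+2})$, so that the dichotomic choice of $k_m\in\{\lfloor\gamma\rfloor,\lfloor\gamma\rfloor+1\}$ forces $\tau_m=\rho_m/2^{k_m}\in[1,2)$. For the merger step ($m\leq -1$), symmetrically $\rho_m=2^{-\gamma}\tau_{m+1}\in[2^{-\lfloor\gamma\rfloor-1},2^{-\lfloor\gamma\rfloor+1})$ and the analogous choice of $k_m$ forces $\tau_m=2^{k_m}\rho_m\in[1,2)$. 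This is the main bookkeeping obstacle: if one tries a fixed $k_m$ instead of the dichotomic one, the invariant fails whenever $\gamma$ is irrational. The dyadic identities collected in Remark 3.6 are an easy corollary of this inductive step.

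Given the invariant, the three conditions are short to verify. For (i), boundedness of $\tau_m$ gives $\mathrm{diam}\,Q\leq\sqrt{4\cdot 2^{-2m\gamma}+d\cdot 2^{-2m}}\to 0$ as $m\to+\infty$ and $|Q|\geq 2^{-m(\gamma+d)}\to\infty$ as $m\to-\infty$. For (iii), the division step splits each parent $Q'\in\mathbb{Q}_{m-1}^{(\gamma)}$ into $2^{k_m}$ time subintervals and $2^d$ spatial subcubes, so $|Q'|/|Q|=2^{k_m+d}$; the merger step is symmetric; hence $N_0:=2^{\lfloor\gamma\rfloor+d+1}$ works uniformly in $m$. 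For (ii), nesting in the division direction is literal, since every offspring is built as a subset of a named parent. In the merger direction, one notes that the time endpoints of level $m$ are exactly the level-$(m+1)$ endpoints whose index lies in $2^{k_m}\mathbb{Z}$ (this is the identity $2^{-m\gamma}\tau_m=2^{k_m}\cdot 2^{-(m+1)\gamma}\tau_{m+1}$), while the spatial grouping is the ordinary dyadic one; hence each level-$(m+1)$ cube sits in a unique level-$m$ cube. This completes the plan.
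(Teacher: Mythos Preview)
Your proposal is correct and follows essentially the same route as the paper: the paper dismisses conditions (i) and (ii) as ``obvious'' from the division--merger construction and then computes the volume ratio $|Q'|/|Q|=2^{d+k_{m+1}}\leq 2^{d+\lfloor\gamma\rfloor+1}$ for (iii), which is exactly your computation with a shift of index. The only difference is that you spell out the verification of (i) and (ii) explicitly, which the paper leaves implicit in the construction and Remark~3.6.
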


\begin{proof}
Due to the above division-merger procedures, (i) and (ii) of Definition
\ref{def:filtraion} are obvious. Hence it suffices to show the regularity
condition (iii). For $m\in\mathbb{Z}$, take $Q$ and $Q'$ such that
$Q\subset Q'$, $Q'\in\mathbb{Q}_{m}^{(\gamma)}$, and $Q\in\mathbb{Q}_{m+1}^{(\gamma)}$.
From \eqref{eq:cubic form}, we can write
$$
Q'(t',x')=[t',t'+2^{-m\gamma}\tau_{m})\times\prod_{j=1}^{d}[x'_{j},x'_{j}+2^{-m}),
$$
and
$$
Q(t,x)=[t,t+2^{-(m+1)\gamma}\tau_{m+1})\times\prod_{j=1}^{d}[x_{j},x_{j}+2^{-m-1}).
$$
Then by Remark \ref{Rmk : cubic is dyadic},
$$
\frac{|Q'(t',x')|}{|Q(t,x)|}=\frac{2^{-m\gamma-md}\tau_{m}}{2^{-(m+1)\gamma-(m+1)d}\tau_{m+1}}=2^{d+k_{m+1}}\leq2^{d+\lfloor\gamma\rfloor+1}.
$$
The theorem is proved.
\end{proof}

\mysection{Proof of Theorem \ref{main 11} and \ref{main 22}}
                                            \label{pf main 11}

We first  check the H\"ormander condition under  Assumption \ref{as 1}.

\begin{lemma}
                                  \label{main result2}
 Under Assumption \ref{as 1}, the
kernel $K(t,x,s,y)$ satisfies H\"ormander condition \eqref{eq:KrylovHormanderCondition}
with respect to $(\mathbb{Q}_{m}^{(\gamma)},m\in\mathbb{Z})$, the  filtration of partitions in Theorem \ref{thm 8.19}.
\end{lemma}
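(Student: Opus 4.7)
\medskip

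\noindent\textbf{Proof plan.} The idea is to choose an appropriate dilation $Q^*$ of each $Q \in \mathbb{Q}_m^{(\gamma)}$ whose aspect ratio matches the parabolic scaling of the partition, and then split $\mathbb{R}^{d+1} \setminus Q^*$ into a ``future-time'' region and a ``spatial exterior'' region on which the three parts of Assumption \ref{as 1} apply respectively.

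Write a generic $Q \in \mathbb{Q}_m^{(\gamma)}$ as $Q = [T, T+h) \times \prod_{j=1}^d [x_0^j, x_0^j + \ell)$ with $h = 2^{-m\gamma}\tau_m$ and $\ell = 2^{-m}$. The crucial feature of the filtration constructed in Section~3 is that $\tau_m \in [1,2)$, so $h^{1/\gamma}$ is comparable to $\ell$ up to an absolute constant. Fix absolute constants $C_1, C_2, C_3 > 1$ (to be specified) and define
\[
Q^{*} = [T - C_1 h,\, T + C_2 h) \times \prod_{j=1}^d [x_0^j - C_3 \ell,\, x_0^j + (C_3+1)\ell).
\]
Then $\bar Q \subset Q^*$ and $|Q^*|/|Q|$ is bounded by an absolute constant, so condition (iii) of Definition \ref{CZ kernel} on the dilation is satisfied.

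Now fix $(s,y), (r,z) \in Q$; note $|y-z| \leq \sqrt{d}\,\ell$ and $|s-r| \leq h$. Split $\mathbb{R}^{d+1} \setminus Q^*$ as $A \cup B$, where
\[
A := \{t \geq T + C_2 h\} \times \mathbb{R}^d, \qquad B := [T - C_1 h,\, T + C_2 h) \times (\mathbb{R}^d \setminus Q^*_{\mathrm{space}}),
\]
and observe that on the remaining piece $\{t < T - C_1 h\}$ both $K(t,x,s,y)$ and $K(t,x,r,z)$ vanish because $t \leq T \leq s \wedge r$.

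On the region $A$, estimate by the triangle inequality
\[
|K(t,x,s,y) - K(t,x,r,z)| \leq |K(t,x,s,y) - K(t,x,s,z)| + |K(t,x,s,z) - K(t,x,r,z)|.
\]
For the first summand apply Assumption \ref{as 1}(i) with $a = T + C_2 h$: since $s < T+h$ we have $a - s > (C_2-1)h$, hence the argument of $\varphi$ is at most $\sqrt{d}\,\ell / ((C_2-1)h)^{1/\gamma}$, which is bounded by an absolute constant by the scaling $h^{1/\gamma} \asymp \ell$. For the second summand apply Assumption \ref{as 1}(ii) with $a = T + C_2 h$ and $b = T + h \geq s \vee r$; the argument is $|s-r|/(a-b) \leq h/((C_2-1)h)$, again bounded.

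On the region $B$, use $|K(t,x,s,y) - K(t,x,r,z)| \leq |K(t,x,s,y)| + |K(t,x,r,z)|$. For $x \notin Q^*_{\mathrm{space}}$ and $y \in Q_{\mathrm{space}}$, we have $|x-y| \geq (C_3 - 1)\ell =: \rho$. Applying Assumption \ref{as 1}(iii) with $b = T + C_2 h$ (and using that the integrand vanishes for $t \leq s$) gives
\[
\int_s^{T+C_2 h}\!\!\int_{|x-y|\geq \rho} |K(t,x,s,y)|\,dxdt \leq \varphi\bigl((T+C_2 h - s)^{1/\gamma}/\rho\bigr),
\]
and the argument is bounded by an absolute constant since $T + C_2 h - s \leq (1+C_2) h$ and $\rho^\gamma \asymp h$. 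The same bound handles the $K(t,x,r,z)$ term. Summing the four contributions yields \eqref{eq:KrylovHormanderCondition} with a constant $N_0$ depending only on $d$, $\gamma$ and $\varphi$.

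The main obstacle, and the reason the bespoke filtration of Section~3 is needed, is precisely the matching of scales: all three parts of Assumption \ref{as 1} couple time and space through the exponent $\gamma$, so only a filtration whose cells satisfy $h^{1/\gamma} \asymp \ell$ uniformly in $m$ will produce bounded arguments of $\varphi$. For irrational $\gamma$ this cannot be achieved with truly dyadic time scales, which is why the $\tau_m$-adjustment in the division--merger procedure is essential.
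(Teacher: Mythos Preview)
Your proposal is correct and follows essentially the same approach as the paper: define a parabolically scaled dilation $Q^*$, split $\mathbf{R}^{d+1}\setminus Q^*$ into a far-future region and a near-time spatial-exterior region, apply the triangle inequality $|K(t,x,s,y)-K(t,x,r,z)|\le |K(t,x,s,y)-K(t,x,s,z)|+|K(t,x,s,z)-K(t,x,r,z)|$ on the first region to invoke parts (i) and (ii) of Assumption~\ref{as 1}, and bound the two $|K|$ terms separately on the second region via part (iii). The only cosmetic differences are that the paper fixes explicit constants ($Q^*=[t_0,t_0+4\cdot 2^{-m\gamma})\times\prod[-2\cdot 2^{-m},2\cdot 2^{-m})$, so no backward time extension is used) and normalizes the spatial corner to the origin, whereas you carry generic constants $C_1,C_2,C_3$; your observation that $h^{1/\gamma}\asymp\ell$ because $\tau_m\in[1,2)$ is exactly the mechanism the paper relies on.
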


\begin{proof}
Let
$$
Q=[t_0, t_0+ 2^{-m\gamma} \tau_m) \times \prod_{j=1}^d [0, 2^{-m}), \quad m \in \bZ
$$
and
$$
Q^\ast=[t_0, t_0 + 4 \cdot 2^{-m\gamma}) \times \prod_{j=1}^d [-2 \cdot 2^{-m}, 2 \cdot 2^{-m}), \quad m \in \bZ,
$$
where $1 \leq \tau_m \leq 2$.

It suffices to show
\begin{equation}        \label{42 con}
\sup_{(s,y),(r,z) \in Q}\int_{\mathbf{R}^{d+1}\setminus Q^{*}}|K(t,x,s,y)-K(t,x,r,z)|~dxdt <\infty.
\end{equation}
Put
$$
\Gamma_1 =  \{ t \geq t_0+4 \cdot 2^{-m\gamma} \} \times \fR^d,
$$
and
$$
\Gamma_2 = \{ t_0 <t < t_0+4 \cdot 2^{-m\gamma} \} \cap (\fR^{d+1} \setminus Q^\ast).
$$
Recall that $K(t,x,s,y)$ vanishes if $t \leq s$.
Then obviously for any $(s,y),(r,z) \in Q$,
\begin{align*}
& \int_{\mathbf{R}^{d+1}\setminus Q^{*}}| K(t,x,s,y)- K(t,x,r,z)|~dxdt \\
&\leq \int_{\Gamma_1}| K(t,x,s,y)- K(t,x,r,z)|~dxdt
 +\int_{\Gamma_2}| K(t,x,s,y)- K(t,x,r,z)|~dxdt \\
&\leq \int_{\Gamma_1}|K(t,x,s,y)- K(t,x,s,z)|~dxdt
 + \int_{\Gamma_1}| K(t,x,s,z)- K(t,x,r,z)|~dxdt \\
&\quad + 2\sup_{(s,y) \in Q}\int_{\Gamma_2}| K(t,x,s,y)|~dxdt =: \cI_1 + \cI_2 + \cI_3.
\end{align*}

First we estimate $\cI_1$.	
Observe that
$$
t_0+ 4 \cdot 2^{-m\gamma} - s \geq 2^{-m \gamma},
$$
and $|z-y| \leq 2 \cdot 2^{-m}$.
So by \eqref{as 1 1},
\begin{align*}
\cI_1
&\leq \int_{t_0+4 \cdot 2^{-m\gamma}}^\infty \int_{\fR^d}  |K(t,x,s,y)- K(t,x,s,z)|~dxdt \\
&\leq \varphi\big( (t_0+4 \cdot 2^{-m\gamma}-s)^{-1/\gamma}|z-y| \big) \leq \varphi(2).
\end{align*}
For $\cI_2$ we use \eqref{as 1 2}.
Since
$ s,r \in [t_0, t_0 + 2 \cdot 2^{-m\gamma}]$,
we have
\begin{align*}
\cI_2
&\leq \int_{t_0+4 \cdot 2^{-m\gamma}}^\infty \int_{\fR^d}  K(t,x,s,z) -  K(t,x,r,z)|~dxdt \\
&\leq \varphi\big( (2 \cdot 2^{-m\gamma})^{-1}|s-r| \big) \leq \varphi( 1).
\end{align*}
Finally we estimate $\cI_3$. Note that for $(t,x) \in \Gamma_2$ and $(s,y) \in Q$
$$
|x-y| \geq 2^{-m}.
$$
Hence from \eqref{as 1 3},
\begin{align*}
\cI_3
&\leq 2\sup_{(s,y) \in Q} \int_{t_0}^{t_0+4 \cdot 2^{-m\gamma}} \int_{|x-y| \geq  2^{-m}}  K(t,x,s,y)~dxdt \\
&\leq 2\sup_{(s,y) \in Q} \int_{s}^{t_0+4 \cdot 2^{-m\gamma}} \int_{|x-y| \geq  2^{-m}}  K(t,x,s,y)~dxdt \\
&\leq 2\varphi\big( (4 \cdot 2^{-m\gamma})^{1/\gamma} ( 2^{-m})^{-1} \big) \leq 2\varphi( 4^{1/\gamma}),
\end{align*}
where the second inequality is because $K$ vanishes if $t \leq s$.
Therefore \eqref{42 con} is proved.
\end{proof}

We define the operator $\mathcal{K}(t,s)$ as follows :
\begin{align*}
\mathcal{K}(t,s)f(x)=\int_{\mathbf{R}^{d}}K(t,x,s,y)f(y)dy, \quad f \in C_0^\infty.
\end{align*}
Suppose that \eqref{pq con} holds, that is  for any $t>s$ and $f \in C_0^\infty$
\begin{align}
                \label{lp est}
\Big\|\int_{\fR^d}K(t,x,s,y)f(y)dy \Big\|_{L_{p_0}} \leq \varphi(t-s) \|f\|_{L_{p_0}}.
\end{align}
Since $\mathcal{K}(t,s)$ is linear and \eqref{lp est} holds, the operator $\mathcal{K}(t,x)$ is uniquely extendible to $L_{p_0}$.
Hence we can consider $\mathcal{K}(t,s)$ as a bounded operator on $L_{p_0}$. Denote
$$
\mathbb{Q}^{time}_m:=\{[4^{-m}i, 4^{-m}(i+1)) :  i\in \bZ\}, \quad m\in \bZ.
$$
\begin{lemma}
			\label{main result3}
Suppose that Assumption \ref{as 1} (ii) and \eqref{pq con} hold, and $K(t,s,x,y)=K(t,s,x-y)$. Then
$\mathcal{K}(t,s)$ satisfies the H\"ormander condition \eqref{eq:KrylovHormanderCondition} with $n=1$ and  	 $(\mathbb{Q}^{time}_{m},m\in\mathbb{Z})$
\end{lemma}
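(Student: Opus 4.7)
The plan is to verify the operator-valued H\"ormander condition on $(\mathbb{Q}^{time}_m,m\in\mathbb{Z})$ by combining three ingredients: (a) the causality $\mathcal{K}(t,s)=0$ for $t\le s$, so that integration over the ``past'' of any cube $Q$ contributes nothing; (b) the space-translation invariance $K(t,s,x,y)=K(t,s,x-y)$, which lets Young's inequality convert the $L(L_{p_0})$ operator norm of a convolution into the $L_1(\mathbf{R}^d)$ norm of its kernel; and (c) Assumption \ref{as 1}(ii), which already has precisely the integrated $L_1$ form that is needed.

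Concretely, fix $m\in\mathbb{Z}$ and $Q=[t_0,\,t_0+4^{-m})\in\mathbb{Q}^{time}_m$, and choose $Q^{*}=[t_0-4^{-m},\,t_0+2\cdot 4^{-m}]$. This is closed, contains $\bar Q$, and satisfies $|Q^{*}|=3|Q|$, so the regularity condition in Definition \ref{CZ kernel} holds with constant $3$. For $s,r\in Q$, I would split $\int_{\mathbf{R}\setminus Q^{*}}\|\mathcal{K}(t,s)-\mathcal{K}(t,r)\|_{L(L_{p_0})}\,dt$ over the two half-lines. On $(-\infty,\,t_0-4^{-m})$ one has $t<t_0\le s\wedge r$, so $\mathcal{K}(t,s)=\mathcal{K}(t,r)=0$ and the contribution vanishes. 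On $(t_0+2\cdot 4^{-m},\infty)$, $\mathcal{K}(t,s)-\mathcal{K}(t,r)$ is convolution by $k_t(\cdot):=K(t,s,\cdot)-K(t,r,\cdot)$, so Young's inequality yields $\|\mathcal{K}(t,s)-\mathcal{K}(t,r)\|_{L(L_{p_0})}\le \|k_t\|_{L_1(\mathbf{R}^d)}$ for each $t$ with $k_t\in L_1$. Integrating and invoking Assumption \ref{as 1}(ii) with $y=0$, $a=t_0+2\cdot 4^{-m}$, $b=t_0+4^{-m}$ (so $b\ge s\vee r$ and $a-b=4^{-m}\ge|s-r|$) gives $\int_a^{\infty}\|k_t\|_{L_1}\,dt\le\varphi\!\big(|s-r|/(a-b)\big)\le\varphi(1)$.

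The only subtlety is the pointwise application of Young's inequality, which requires $k_t\in L_1(\mathbf{R}^d)$; by Fubini together with the finiteness furnished by Assumption \ref{as 1}(ii), this holds for almost every $t>a$, which is enough for the integral estimate. Hypothesis \eqref{pq con} plays the auxiliary role of guaranteeing that each $\mathcal{K}(t,s)$ is a bounded operator on $L_{p_0}$, so that the operator-norm difference under the integral is well-defined. I do not foresee any serious obstacle beyond this bookkeeping: the $4^{-m}$-scaling of the time intervals in $\mathbb{Q}^{time}_m$ matches the parabolic balance $|s-r|\sim 4^{-m}\sim a-b$ built into \eqref{as 1 2}.
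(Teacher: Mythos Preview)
Your proposal is correct and follows essentially the same route as the paper: split $\mathbf{R}\setminus Q^{*}$ into past and future, discard the past by causality, bound the operator norm of the convolution difference by the $L_1$ norm of its kernel via Young's inequality, and then apply Assumption \ref{as 1}(ii) with $a$ equal to the right endpoint of $Q^{*}$ and $b$ the right endpoint of $Q$. The only cosmetic difference is that the paper takes $Q^{*}=[t_0-2\delta,\,t_0+2\delta)$ (so $|Q^{*}|=4|Q|$) rather than your $[t_0-\delta,\,t_0+2\delta]$, which is immaterial.
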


\begin{proof}

Let
$$
Q=[t_{0},t_{0}+\delta),\quad Q^{*}=[t_{0}-2\delta,t_{0}+2\delta).
$$
Note that for $t\notin Q^{*}$ and $s,r\in Q$, we have
$$
|s-r|\leq \delta,\quad |t-(t_{0}+\delta)|\geq \delta,
$$
and recall $K(t,s,x-y)=0$ if $t\leq s$.
Note
\begin{align}
                \label{norm of K difference}
\|\mathcal{K}(t,s)-\mathcal{K}(t,r)\|_{L(L_{p_0})}&=\sup_{\|f\|_{L_{p_0}}=1}\left\|\int_{\mathbf{R}^{d}}(K(t,s,x-y)-K(t,r,x-y))f(y)dy\right\|_{L_{p_0}}\nonumber
\\ &\leq \sup_{\|f\|_{L_{p_0}}=1}\|f\|_{L_{p_0}}\int_{\mathbf{R}^{d}}|K(t,s,x)-K(t,r,x)|dx\nonumber
\\ &= \int_{\mathbf{R}^{d}}|K(t,s,x)-K(t,r,x)|dx.
\end{align}

Therefore, by Assumption \ref{as 1} (ii) and \eqref{norm of K difference},
\begin{align*}
\int_{\mathbf{R}\setminus Q^{*}}\|\mathcal{K}(t,s)-\mathcal{K}(t,r)\|_{L(L_{p_0})} \,dt &\leq \int_{\mathbf{R}\setminus Q^{*}}\int_{\mathbf{R}^{d}}|K(t,s,x)-K(t,r,x)|dxdt
\\ &\leq \int_{t\geq t_{0}+2\delta}\int_{\mathbf{R}^{d}}|K(t,s,x)-K(t,r,x)|dxdt
\\ &\leq N\varphi(\frac{|s-r|}{\delta})\leq N\varphi(1) \leq N.
\end{align*}
The lemma is proved.
\end{proof}

\begin{proof}[{\bf{Proof of Theorem \ref{main 11} and \ref{main 22}}}]

Due to Lemma \ref{main result2} and Lemma \ref{main result3},
these are  easy consequences of Theorem \ref{thm:main result1}. We only mention that
in the proof of Theorem \ref{main 22}, following the proof of Theorem 1.1 of \cite{krylov2001calderon}, one can easily check that
for almost any $x$ outside of the closed support of $f \in C_0^\infty(\fR, L_{p_0}) $,
\begin{align*}
\cG f(t,x) =\int_{-\infty}^\infty\mathcal{K}(t,s)f(s,x)~ds,
\end{align*}
where $\cG$ denote the unique extension on $L_{p_0} (\fR^{d+1})$ stated in Theorem \ref{main 11}. The theorems are proved.
\end{proof}

\mysection{Auxiliary results}

In this section we study a kernel $p_{\lambda}$ and an operator $\cR_{\lambda}$ which are related to  $\cA(t)-\lambda$.

\begin{lemma}
				\label{frac est}
Let $\sigma\geq 0$,  $\delta>\sigma-\frac{d}{2}$ and  $h\in C^{1+\lfloor\sigma\rfloor}(\mathbf{R}^{d}\setminus\{0\})$. Suppose that
 there exists constant $c>0$ such that
\begin{equation}
                     \label{eqn 8.14.2}
|D^m_x(-\Delta)^{\lfloor\frac{\sigma}{2}\rfloor}h(x)|\leq c|x|^{\delta-2\lfloor\frac{\sigma}{2}\rfloor-m}\exp\{-c|x|^{\gamma}\},\quad\forall x\in\mathbf{R}^{d}\setminus\{0\}
\end{equation}
 for $m=0,1$. Also assume (\ref{eqn 8.14.2})  holds for $m=2$ if  $1 \leq \sigma-2\lfloor\frac{\sigma}{2}\rfloor<2 $.
Then
$$
\|(-\Delta)^{\sigma/2}h\|_{L_{2}(\mathbf{R}^{d})}\leq N
$$
where $N=N(c,d,\gamma,\delta,\sigma)$.\end{lemma}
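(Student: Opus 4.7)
The plan is to reduce the lemma to a base case $\beta \in [0,1)$ and then handle that case via the Gagliardo--Slobodeckij characterization of $H^\beta$. Introduce $k := \lfloor \sigma/2 \rfloor$, $\beta := \sigma - 2k \in [0,2)$, $g := (-\Delta)^k h$, and $\mu := \delta - 2k$. Then $(-\Delta)^{\sigma/2} h = (-\Delta)^{\beta/2} g$, the pointwise hypothesis becomes $|D^m g(x)| \le c|x|^{\mu-m}\exp(-c|x|^\gamma)$ for $m=0,1$ (and $m=2$ when $\beta\in[1,2)$), and $\delta > \sigma - d/2$ rewrites as the clean condition $\mu > \beta - d/2$.

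\textbf{Reduction from $\beta \in [1,2)$ to $\beta \in [0,1)$.} By Plancherel, $\widehat{\partial_j g}(\xi) = i\xi_j \hat g(\xi)$, and $|\xi|^2 = \sum_{j=1}^d \xi_j^2$,
$$
\|(-\Delta)^{\beta/2} g\|_{L_2}^2 = N \sum_{j=1}^d \|(-\Delta)^{(\beta-1)/2} \partial_j g\|_{L_2}^2 .
$$
Each $\partial_j g$ satisfies the hypotheses of the lemma with $(\mu-1, \beta-1)$ in place of $(\mu,\beta)$: the additional bound on $D^2 g$ assumed in this regime furnishes exactly the $m=1$ bound for $\partial_j g$, and $\mu > \beta - d/2$ is equivalent to $\mu - 1 > (\beta-1) - d/2$. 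This matches the hypothesis of the base case with $\beta' = \beta - 1 \in [0,1)$.

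\textbf{Base case $\beta \in [0,1)$.} When $\beta = 0$ the conclusion is immediate: $\|g\|_{L_2}^2 \le c^2 \int|x|^{2\mu}\exp(-2c|x|^\gamma)dx$, which is finite since $2\mu + d > 0$ near the origin and the exponential provides decay at infinity. When $\beta \in (0,1)$, the Gagliardo seminorm identity yields
$$
\|(-\Delta)^{\beta/2} g\|_{L_2}^2 \le N \int_{\mathbf{R}^d}\!\int_{\mathbf{R}^d} \frac{|g(x)-g(y)|^2}{|x-y|^{d+2\beta}}\,dy\,dx;
$$
symmetrize to $|x|\ge|y|$ and split on whether $|x-y|\le|x|/2$ or $|x-y|>|x|/2$. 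In the near regime, the mean value theorem and $|\nabla g(z)| \le c|z|^{\mu-1}\exp(-c|z|^\gamma)$ (noting $|z|\ge|x|/2$ on the segment from $x$ to $y$) give $|g(x)-g(y)|^2 \le N|x-y|^2|x|^{2(\mu-1)}\exp(-c|x|^\gamma)$, and polar integration in $y$ contributes the factor $|x|^{2-2\beta}$ (convergent since $\beta<1$). In the far regime, $|g(x)-g(y)|^2 \le 2(|g(x)|^2+|g(y)|^2)$ and polar integration in $y$ contributes $|x|^{-2\beta}$ (convergent since $\beta>0$). Both regimes collapse to the master integral
$$
N \int_{\mathbf{R}^d} |x|^{2\mu-2\beta}\exp(-c|x|^\gamma)dx,
$$
which is finite precisely when $2(\mu-\beta)+d>0$, i.e.\ exactly the hypothesis $\mu > \beta - d/2$.

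\textbf{Main obstacle.} The essential difficulty is the near-diagonal part of the Gagliardo double integral, where the singularity of $\nabla g$ at the origin must be reconciled with the kernel $|x-y|^{-d-2\beta}$ along the diagonal. The geometric splitting at the scale $|x|/2$ is the mechanism that makes these two singularities simultaneously integrable, and the hypothesis $\delta > \sigma - d/2$ is exactly the sharp borderline condition under which the master integral above converges; any weaker bound on $\delta$ would break this single critical estimate.
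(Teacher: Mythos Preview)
Your proof is correct and the overall architecture matches the paper's: both pull out $(-\Delta)^{\lfloor\sigma/2\rfloor}$ to reduce to the range $\sigma\in[0,2)$, with the translated parameters $\tilde\sigma=\sigma-2\lfloor\sigma/2\rfloor$ and $v=(-\Delta)^{\lfloor\sigma/2\rfloor}h$ playing exactly the role of your $\beta$ and $g$. The paper, however, does not argue the base case at all; it simply cites \cite[Lemma~5.1]{Kim2014BMOpseudo} for $\sigma\in[0,2)$.

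Your contribution is therefore a genuinely self-contained treatment of that base case. The further reduction from $\beta\in[1,2)$ to $\beta-1\in[0,1)$ via $\|(-\Delta)^{\beta/2}g\|_{L_2}^2=\sum_j\|(-\Delta)^{(\beta-1)/2}\partial_jg\|_{L_2}^2$ is clean and uses precisely the extra $m=2$ hypothesis. The Gagliardo--Slobodeckij argument for $\beta\in(0,1)$ is sound; the near/far splitting at scale $|x|/2$ does exactly what is needed, and the resulting master integral $\int|x|^{2\mu-2\beta}e^{-c'|x|^\gamma}dx$ isolates the condition $\mu>\beta-d/2$ as the sharp threshold for integrability at the origin. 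One small point worth making explicit when you write this up: in the far regime the $|g(y)|^2$ contribution is handled most transparently by integrating in $x$ first (using $|x-y|>|x|/2\ge|y|/2$), rather than in $y$ as your phrasing suggests; the outcome is the same master integral in the $y$ variable. Also, the exponential constant degrades to some $c'=c/2^\gamma$ after the mean-value step, which is harmless but should be tracked.

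What your approach buys over the paper's is independence from the external reference and a transparent explanation of why $\delta>\sigma-d/2$ is the right hypothesis. What the paper's citation buys is brevity.
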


\begin{proof}
 The case $\sigma\in [0,2)$ is proved in  \cite[Lemma 5.1]{Kim2014BMOpseudo}. For $\sigma \geq 2$, denote $$\tilde{\sigma}:=\sigma-2\lfloor\frac{\sigma}{2}\rfloor, \quad \quad
v:=(-\Delta)^{\lfloor\frac{\sigma}{2}\rfloor}h\in C^{1+\lfloor\sigma\rfloor-2\lfloor\frac{\sigma}{2}\rfloor}.
$$
Then
$$
\|(-\Delta)^{\sigma/2}h\|_{L_{2}(\mathbf{R}^{d})}=
\|(-\Delta)^{\frac{\sigma}{2}-\lfloor\frac{\sigma}{2}\rfloor}(-\Delta)^{\lfloor\frac{\sigma}{2}\rfloor}h\|_{L_{2}(\mathbf{R}^{d})}
=\|(-\Delta)^{\tilde{\sigma}/2}v\|_{L_{2}(\mathbf{R}^{d})}.
$$
Thus it is enough to  apply the result for $\sigma\in[0,2)$. The lemma is proved.
\end{proof}

Recall that $\psi(t,\xi)$ is the symbol of $\cA(t)$ satisfying
\begin{equation}
                  \label{14.3}
\Re[\psi(t,\xi)]\leq-\kappa|\xi|^{\gamma},
\end{equation}
$$
|D_{\xi}^{\alpha}\psi(t,\xi)|\leq\kappa^{-1}|\xi|^{\gamma-|\alpha|},\quad \quad  |\alpha|\leq\lfloor\frac{d}{2}\rfloor+1.
$$

Note that due to (\ref{14.3}),
$$
p_{\lambda}(t,s,x):=1_{s<t}\mathcal{F}^{-1}\left\{\exp\left(\int_{s}^{t}(\psi(r,\xi)-\lambda)dr\right)\right\}
$$
is well defined for any $\lambda\geq 0$. Similarly one can  check that
\begin{align*}
\mathcal{R}_{\lambda}f(t,x):=\mathcal{F}^{-1}\left(\int_{-\infty}^{\infty} 1_{s<t}
\exp \big(\int_{s}^{t}(\psi(r,\xi)-\lambda)dr \big) \cF f(s,\xi)  ~ds \right)(x)
\end{align*}
is well defined for  any  $\lambda > 0$ and $f \in L_2(\mathbf{R}^{d+1})$.
Obviously
\begin{align*}
\mathcal{R}_{\lambda}f(t,x)=\int_{-\infty}^{\infty}\int_{\mathbf{R}^{d}}p_{\lambda}(t,s,x-y)f(s,y)dyds,\quad \forall f\in C_0^\infty(\fR^{d+1}).
\end{align*}

In the following lemma we show that the operator $\mathcal{R}_{\lambda}$ is continuously extensible to $ L_{q}(\mathbf{R},L_{p})$ for any $p,q>1$.
\begin{lemma}
			\label{thm:norm of R}
Let  $\lambda >0$ and $p,q >1$.  Then
\begin{equation*}
     \|p_{\lambda}(t,s,\cdot)\|_{L_{1}}\leq N 1_{s<t}e^{-\lambda(t-s)},
     \end{equation*}
\begin{equation*}
        \|\mathcal{R}_{\lambda}f(t,\cdot)\|_{L_p} \leq N \lambda^{-(p-1)/p}\|f\|_{L_{p}(\mathbf{R}^{d+1})}, \quad \forall f \in C_0^\infty(\fR^{d+1}),
\end{equation*}
and
\begin{equation}
            \label{norm of R}
\|\mathcal{R}_{\lambda}f\|_{L_{q}(\mathbf{R},L_{p})}\leq \frac{N}{\lambda}\|f\|_{L_{q}(\mathbf{R},L_{p})}, \quad \forall f \in C_0^\infty(\fR^{d+1}),
\end{equation}
where $N=N(d,p,q,\kappa,\gamma)$.
\end{lemma}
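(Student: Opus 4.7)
The plan is to establish the three inequalities in the order stated; the $L_{1}$-bound on $p_{\lambda}(t,s,\cdot)$ is the crux, after which the other two follow from standard convolution arguments.

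Factor $p_{\lambda}(t,s,x)=e^{-\lambda(t-s)}q(t,s,x)$ with $q(t,s,x)=\cF^{-1}[\exp(\int_{s}^{t}\psi(r,\cdot)\,dr)](x)$, so that the first claim reduces to the uniform bound $\|q(t,s,\cdot)\|_{L_{1}}\leq N$ for $t>s$. Introducing the parabolic rescaling $\eta=(t-s)^{1/\gamma}\xi$, $y=(t-s)^{-1/\gamma}x$, an elementary change of variables gives $\|q(t,s,\cdot)\|_{L_{1}}=\|\cF^{-1}[H]\|_{L_{1}}$, where
\[
H(\eta):=\exp\!\Bigl(\int_{s}^{t}\psi(r,(t-s)^{-1/\gamma}\eta)\,dr\Bigr),
\]
and by (\ref{ellipticity})--(\ref{differentiability}) the rescaled exponent $\Phi:=\log H$ satisfies $\Re[\Phi(\eta)]\leq -\kappa|\eta|^{\gamma}$ and $|D_{\eta}^{\alpha}\Phi(\eta)|\leq \kappa^{-1}|\eta|^{\gamma-|\alpha|}$ for $|\alpha|\leq\lfloor d/2\rfloor+1$, with the time-dependence completely absorbed. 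Fixing $\sigma\in(d/2,d/2+1)$ and applying the Cauchy--Schwarz inequality with weight $1+|y|^{2\sigma}$ together with Plancherel's theorem yields
\[
\|\cF^{-1}[H]\|_{L_{1}}\leq N\bigl(\|H\|_{L_{2}}^{2}+\|(-\Delta)^{\sigma/2}H\|_{L_{2}}^{2}\bigr)^{1/2};
\]
the $L_{2}$-term is handled by the Gaussian-type decay $|H|\leq e^{-\kappa|\eta|^{\gamma}}$, and the second summand is controlled by Lemma \ref{frac est} applied to $H$. Verifying its hypothesis (\ref{eqn 8.14.2}) is done via the Fa\`a di Bruno formula, which expands $D^{\alpha}H=D^{\alpha}e^{\Phi}$ as a finite sum of terms $H\cdot\prod_{j}(D^{\beta_{j}}\Phi)^{k_{j}}$; combining the derivative bounds on $\Phi$ with the exponential decay of $H$ produces the required pointwise estimates for some $\delta$ slightly above $\sigma-d/2$.

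Granted the bound from Step 1, Minkowski's integral inequality and Young's convolution inequality in space yield
\[
\|\cR_{\lambda}f(t,\cdot)\|_{L_{p}}\leq \int_{-\infty}^{t}\|p_{\lambda}(t,s,\cdot)\|_{L_{1}}\|f(s,\cdot)\|_{L_{p}}\,ds\leq N\int_{-\infty}^{t}e^{-\lambda(t-s)}\|f(s,\cdot)\|_{L_{p}}\,ds.
\]
H\"older's inequality in $s$ with conjugate exponents $p/(p-1)$ and $p$ gives the second inequality with the claimed factor $\lambda^{-(p-1)/p}$, while interpreting the right-hand side of the display above as the time-convolution of $\|f(\cdot,\cdot)\|_{L_{p}}$ with the $L_{1}(\fR)$-kernel $N\,1_{s\geq 0}e^{-\lambda s}$ (of mass $N/\lambda$) and invoking Young's inequality in time yields (\ref{norm of R}).

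The principal difficulty lies in verifying the hypotheses of Lemma \ref{frac est}: one must control the Fa\`a di Bruno products both near $\eta=0$, where the bounds on $D^{\beta}\Phi$ are singular, and for large $|\eta|$, where the polynomial blow-up of the product must be absorbed by the exponential decay of $H$. The restriction $|\alpha|\leq\lfloor d/2\rfloor+1$ in (\ref{differentiability}) is precisely what permits the selection of $\sigma$ just above $d/2$, and the parabolic rescaling is essential because no $L_{2}$-norm arising in the Cauchy--Schwarz estimate is uniform in $t-s$ prior to scaling.
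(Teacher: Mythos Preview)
Your proposal is correct and follows essentially the same route as the paper's proof: factor out $e^{-\lambda(t-s)}$, rescale parabolically to normalize $t-s$, bound $\|\cF^{-1}[H]\|_{L_1}$ via Cauchy--Schwarz with weight $1+|y|^{2\sigma}$ together with Plancherel and Lemma~\ref{frac est}, and then deduce the second and third inequalities from Minkowski and Young exactly as you describe. The only imprecision is the stated range for $\sigma$: the Fa\`a di Bruno expansion yields $\delta=\gamma$ in (\ref{eqn 8.14.2}) (the most singular term near $\eta=0$ is $H\cdot D^{\alpha}\Phi$), so the hypothesis $\delta>\sigma-d/2$ of Lemma~\ref{frac est} forces $\sigma<d/2+\gamma$, and the differentiability budget $|\alpha|\le\lfloor d/2\rfloor+1$ on $\psi$ further requires $\sigma$ to sit just above $d/2$---the paper takes $\sigma=(d+\varepsilon)/2$ with $\varepsilon<\min(\gamma,1)/4$.
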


\begin{proof}

(i) Using $xe^{-x}\leq 1$ for $x\geq 0$, one can check that  for any  multi-index $|\alpha|\leq\lfloor\frac{d}{2}\rfloor+1$
\begin{align*}
%                \label{mihlin multi}
\Big|D^{\alpha}_{\xi}\exp\left(\int_{s}^{t}\psi(r,(t-s)^{-\frac{1}{\gamma}}\xi)dr \right) \Big| \leq N(\kappa)|\xi|^{\gamma - |\alpha|}e^{-\kappa |\xi|^{\gamma}}.
\end{align*}
Denote
\begin{align*}
q(t,s,x)&:=1_{s<t}\mathcal{F}^{-1}\left\{\exp\left(\int_{s}^{t}(\psi(r,(t-s)^{-\frac{1}{\gamma}}\xi))dr\right)\right\}
\\ &=(t-s)^{\frac{d}{\gamma}}e^{\lambda(t-s)}p_{\lambda}(t,s,(t-s)^{\frac{1}{\gamma}}x).
\end{align*}

By using H\"{o}lder inequality and Parseval's identity, for $\varepsilon< (\gamma/4\wedge 1/4)$,
\begin{align*}
&\|p_{\lambda}(t,s,\cdot)\|_{L_{1}}=\|(1+|x|^{\frac{d+\varepsilon}{2}})^{-1}(1+|x|^{\frac{d+\varepsilon}{2}})p_{\lambda}(t,s,\cdot) \|_{L_{1}}\\
&\leq N 1_{s<t} e^{-\lambda(t-s)}\left(\int_{\mathbf{R}^{d}}|(1+|x|^{\frac{d+\varepsilon}{2}})q(t,s,x)|^{2}dx\right)^{1/2}
\\ &\leq N1_{s<t} e^{-\lambda(t-s)} \|(1+(-\Delta)^{\frac{d+\varepsilon}{4}})\mathcal{F}(q)(t,s,\cdot)\|_{L_{2}}
\\ &\leq N 1_{s<t} e^{-\lambda(t-s)}\left(\|\mathcal{F}(q)\|_{L_{2}}
+\|(-\Delta)^{\frac{d+\varepsilon}{4}}\mathcal{F}(q)(t,s,\cdot)\|_{L_{2}}\right).
\end{align*}
Then by using Lemma \ref{frac est} with $\sigma = (d+\varepsilon)/2$ and $h(\cdot)=\mathcal{F}(q)(t,s,\cdot)$, we have
$$
\|p_{\lambda}(t,s,\cdot)\|_{L_{1}}\leq N(d,\gamma, \kappa) 1_{s<t}e^{-\lambda(t-s)}.
$$
To apply Lemma \ref{frac est}, $h$ should be $m+2\lfloor \frac{\sigma}{2}\rfloor$-times differentiable, and this is possible since $m+2\lfloor \frac{\sigma}{2}\rfloor\leq \lfloor\frac{d}{2}\rfloor+1$ for $m=0,1$ and $m=2$ if $1\leq \sigma-2\lfloor \frac{\sigma}{2}\rfloor <2$.

(ii) By Minkowski's inequality, Young's inequality, and (i),
\begin{eqnarray}
 \nonumber
\|\mathcal{R}_{\lambda}f(t,\cdot)\|_{L_p}&\leq& \int_{-\infty}^{\infty}1_{s<t}\|p_{\lambda}(s,t,\cdot)\|_{L_{1}}\|f(s,\cdot)\|_{L_{p}}ds \\
&=& N\int_{-\infty}^{\infty} 1_{s<t} e^{-\lambda(t-s)} \| f(s,\cdot)\|_{L_p} ds              \label{eqn 14.5} \\
&\leq& N \lambda^{-(p-1)/p}\|f\|_{L_{p}(\mathbf{R}^{d+1})}. \nonumber
\end{eqnarray}

(iii) By (\ref{eqn 14.5}),
\begin{align*}
\|\mathcal{R}f \|_{L_q(\fR,L_p)}
\leq \left\| \int_0^{\infty} e^{-\lambda s} \|f(t-s,\cdot) \|_{L_p} ds \right\|_{L_q(\fR)} \leq \frac{1}{\lambda} \|f \|_{L_q(\fR,L_p)}.
\end{align*}
The lemma is proved.
\end{proof}

\begin{remark}
                \label{cr ext}
Due to the above lemma, we can consider the continuous extension of  $\cR_{\lambda}$ on $L_{q}(\mathbf{R},L_{p})$ for any $p,q>1$.
From now on, we regard the operator $\cR_{\lambda}$ as this extension on $L_{q}(\mathbf{R},L_{p})$.
Actually $\cR_{\lambda}$ was already defined on $L_2(\fR^{d+1})$, but two different definitions coincide on  $L_2(\fR^{d+1}) \cap L_{q}(\mathbf{R},L_{p})$ due to  Riesz-Fischer theorem.
\end{remark}

\mysection{Proof of Theorem \ref{main appl}}
                                            \label{pf main appl}

                                            Define  $K(t,s,x,y)=K(t,s,x-y)$ by
\begin{equation}
              \label{eqn 14.6}
K(t,s,x)=1_{s<t}\mathcal{F}^{-1} \left\{ |\xi|^{\gamma} \exp\left(\int_{s}^{t} \psi(r,\xi) dr\right) \right\}.
\end{equation}
Also   define
\begin{align*}
\cG f(t,x)=\mathcal{F}^{-1}\left\{\int_{-\infty}^{\infty} 1_{s<t}  |\xi|^{\gamma} \exp\left(\int_{s}^{t} \psi(r,\xi) dr\right) \cF f(s,\xi)  ds \right\}.
\end{align*}
It is easy to check that  $\cG f$ is well defined if $f \in C_0^\infty (\fR^{d+1})$, and furthermore
\begin{align*}
\cG f(t,x)=\int_{-\infty}^{\infty}\int_{\mathbf{R}^{d}}K(t,s,x-y)f(s,y)dyds.
\end{align*}

\begin{thm}
Let $p,q\in(1,\infty)$.  Under the assumptions in Theorem \ref{main appl}, the kernel $K$ satisfies Assumption \ref{as 1} and (\ref{pq con}), and it holds that
\begin{align}\label{Gf estimate}
\|\mathcal{G}f\|_{L_{q}(\mathbf{R},L_{p})}\leq N \|f\|_{L_{q}(\mathbf{R},L_{p})}, \quad \forall\,\,f\in C^{\infty}_0(\fR^{d+1}),
\end{align}
where $N=N(d,p,q,\gamma,\kappa)$.
\end{thm}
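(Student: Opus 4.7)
The plan is to apply Theorem \ref{main 22} to the translation-invariant kernel $K(t,s,x)$ defined by \eqref{eqn 14.6}, and this splits into three tasks: (a) establish the baseline $L_{p_{0}}(\fR^{d+1})$-boundedness of $\cG$ required by \eqref{p0 conti}; (b) verify Assumption \ref{as 1} and the operator bound \eqref{pq con}; (c) invoke Theorem \ref{main 22} for $q\leq p$ and dualize for $q>p$. For (a) the natural choice is $p_{0}=2$: the ellipticity \eqref{ellipticity} gives $|\exp(\int_{s}^{t}\psi(r,\xi)dr)|\leq e^{-\kappa(t-s)|\xi|^{\gamma}}$, so for each fixed $\xi$ the time-convolution kernel $1_{s<t}|\xi|^{\gamma}e^{\int_{s}^{t}\psi\,dr}$ has $L^{1}$-norm in $s$ bounded by $\kappa^{-1}$; Young's inequality in time combined with Plancherel then yields $\|\cG f\|_{L_{2}(\fR^{d+1})}\leq N\|f\|_{L_{2}(\fR^{d+1})}$.

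The technical heart is (b), and it rests entirely on weighted $L_{1}$-estimates of $K(t,s,\cdot)$ obtained via Fourier scaling. Substituting $\xi=(t-s)^{-1/\gamma}\eta$ yields the scaling identity
\begin{equation*}
K(t,s,x)=(t-s)^{-(d+\gamma)/\gamma}\,\tilde K_{t,s}\bigl((t-s)^{-1/\gamma}x\bigr),
\end{equation*}
where $\tilde K_{t,s}$ is the inverse Fourier transform of a symbol uniformly majorized by $|\eta|^{\gamma}e^{-\kappa|\eta|^{\gamma}}$. Following the proof of Lemma \ref{thm:norm of R}, combining Parseval's identity with a polynomial weight $(1+|y|^{(d+\varepsilon)/2})$ and applying Lemma \ref{frac est} produces uniform bounds on $\|\tilde K_{t,s}\|_{L_{1}}$, $\|\nabla_{y}\tilde K_{t,s}\|_{L_{1}}$, and $\||y|^{\alpha}\tilde K_{t,s}\|_{L_{1}}$ for some small $\alpha>0$; scaling back, these translate into $\|K(t,s,\cdot)\|_{L_{1}}\leq N(t-s)^{-1}$, $\|\nabla_{x}K(t,s,\cdot)\|_{L_{1}}\leq N(t-s)^{-1-1/\gamma}$, and $\||x|^{\alpha}K(t,s,\cdot)\|_{L_{1}}\leq N(t-s)^{\alpha/\gamma-1}$. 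Differentiating in $s$ multiplies the symbol by $-\psi(s,\xi)$ (bounded by $\kappa^{-1}|\xi|^{\gamma}$), and the same procedure delivers $\|\partial_{s}K(t,s,\cdot)\|_{L_{1}}\leq N(t-s)^{-2}$. The hypothesis \eqref{differentiability} up to order $\lfloor d/2\rfloor+1$ is exactly what is needed for Lemma \ref{frac est} to apply to these symbols.

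From these kernel bounds, \eqref{pq con} follows from Young's inequality with $\varphi(t-s)=N(t-s)^{-1}$, and Assumption \ref{as 1} is then mechanical: (i) uses $\|K(t,s,\cdot)-K(t,s,\cdot+h)\|_{L_{1}}\leq N(t-s)^{-1}\min\bigl(|h|/(t-s)^{1/\gamma},\,1\bigr)$ (mean-value for small ratio, trivial bound otherwise), after which the substitution $u=|h|/(t-s)^{1/\gamma}$ turns the $t$-integral from $a$ to $\infty$ into a nondecreasing function of $|y-z|/(a-s)^{1/\gamma}$; (ii) uses the fundamental theorem in $\sigma\in[\min(s,r),b]$ together with $\|\partial_{\sigma}K(t,\sigma,\cdot)\|_{L_{1}}\leq N(t-\sigma)^{-2}\leq N(t-b)^{-2}$, giving $|s-r|\int_{a}^{\infty}(t-b)^{-2}dt=N|s-r|/(a-b)$, combined with the trivial bound when $|s-r|\geq a-b$; (iii) uses $\int_{|x|\geq\rho}|K(t,s,x)|dx\leq\rho^{-\alpha}\||x|^{\alpha}K(t,s,\cdot)\|_{L_{1}}\leq N\rho^{-\alpha}(t-s)^{\alpha/\gamma-1}$ integrated over $t\in[s,b]$. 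Theorem \ref{main 22} with $p_{0}=p$ then yields \eqref{Gf estimate} in the range $1<q\leq p$. For $q>p$, the adjoint $\cG^{*}$, obtained under the involution $(t,s,x,y)\mapsto(-s,-t,-y,-x)$, is again of convolution type with symbol $\psi(-r,\xi)$ in place of $\psi(r,\xi)$; this symbol still satisfies \eqref{ellipticity} and \eqref{differentiability}, so applying Theorem \ref{main 22} to $\cG^{*}$ with $p_{0}=p'$ closes the remaining case via $L_{q}(\fR,L_{p})$--$L_{q'}(\fR,L_{p'})$ duality. The main obstacle is the careful bookkeeping of derivative orders in the Parseval/Lemma \ref{frac est} step: since the symbol is only $\lfloor d/2\rfloor+1$-differentiable in $\xi$, the weight order $\alpha+d/2+\varepsilon$ permitted inside Parseval cannot be pushed too high, which in turn constrains the permissible H\"older exponent in the difference estimate used for Assumption \ref{as 1}(i).
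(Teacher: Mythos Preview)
Your proposal is essentially the paper's own argument, with the same scaling identity, the same use of Lemma \ref{frac est} to control weighted $L_1$-norms of $K$, $\nabla_x K$, $\partial_s K$, and the same duality via the time-reflected symbol $\psi(-r,\xi)$. One organizational gap needs to be closed, however: in step (c) you invoke Theorem \ref{main 22} with $p_0=p$, but Theorem \ref{main 22} inherits the hypotheses of Theorem \ref{main 11}, in particular \eqref{p0 conti} with that same $p_0$. You have only established \eqref{p0 conti} for $p_0=2$ in step (a), so you cannot jump directly to $p_0=p$ for arbitrary $p$. The fix is exactly what the paper does in its Part 2: first apply Theorem \ref{main 11} with $p_0=2$ to get $L_p(\fR^{d+1})$-boundedness of $\cG$ for $1<p\leq 2$, then run your duality argument (specialized to $q=p$) to push this to all $p\in(1,\infty)$; only then is \eqref{p0 conti} available with $p_0=p$ and Theorem \ref{main 22} can be invoked as you describe. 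All the ingredients for this intermediate step are already in your proposal, so this is a matter of ordering rather than a missing idea.

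A minor remark on your step (a): the kernel $1_{s<t}|\xi|^{\gamma}\exp\bigl(\int_s^t\psi(r,\xi)\,dr\bigr)$ is not literally a time-convolution since $\psi$ depends on $r$; what you are doing (and what the paper does) is to bound its modulus by the genuine convolution kernel $1_{s<t}|\xi|^{\gamma}e^{-\kappa(t-s)|\xi|^{\gamma}}$ and apply Young (or Parseval in $t$, as the paper prefers) to that majorant.
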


\begin{proof}

{\bf{Part 1}}. We show that the  kernel $K$ defined in (\ref{eqn 14.6}) satisfies Assumption \ref{as 1}, \eqref{p0 conti}, and \eqref{pq con}.
Observe that
\begin{align*}
(t-s)^{1+\frac{d}{\gamma}} & K(t,s,(t-s)^{\frac{1}{\gamma}}x)\\
 & =N1_{s<t}(t-s)^{1+\frac{d}{\gamma}}\int_{\mathbf{R}^{d}}e^{i((t-s)^{\frac{1}{\gamma}}x,\xi)}|\xi|^{\gamma}\exp\left\{ \int_{s}^{t}\psi(r,\xi)dr\right\} d\xi\\
 & =N1_{s<t}(t-s)^{1+\frac{d}{\gamma}}\int_{\mathbf{R}^{d}}e^{i(x,(t-s)^{\frac{1}{\gamma}}\xi)}|\xi|^{\gamma}\exp\left\{ \int_{s}^{t} \psi(r,\xi)dr\right\} d\xi\\
 & =N1_{s<t}\int e^{i(x,\xi)}|\xi|^{\gamma}\exp\left\{\int_{s}^{t} \psi(r,(t-s)^{-\frac{1}{\gamma}}\xi)dr\right\} d\xi.
\end{align*}
Denote
$$F(t,s,\xi)=1_{s<t}|\xi|^{\gamma}\exp\left\{ \int_{s}^{t} \psi(r,(t-s)^{-\frac{1}{\gamma}}\xi)dr\right\}.
$$
 Due to  the assumptions on $\psi(t,\xi)$,
$$
|D_{\xi}^{\alpha}F(t,s,\xi)|\leq N|\xi|^{\gamma-|\alpha|}\exp\{-\kappa|\xi|^{\gamma}\},\quad\forall\xi\in\mathbf{R}^{d}\setminus\{0\}
$$
for every multi-index $\alpha$ with $|\alpha|\leq\lfloor\frac{d}{2}\rfloor+1$.
Therefore, by Lemma \ref{frac est},
$$
\|(-\Delta_{\xi})^{\sigma/2}F(t,s,\xi)\|_{L_{2}(\mathbf{R}^{d})}\leq N(d,\kappa,\gamma)
$$
 for all $\sigma\in[0,\frac{d}{2}+\gamma)\cap[0,\lfloor\frac{d}{2}\rfloor+1]$.

We claim that for any $\mu\in[0,\min\{\gamma,\lfloor\frac{d}{2}\rfloor+1-\frac{d}{2}\})$,
\begin{equation}
             \label{eq:integrability}
\int_{\mathbf{R}^{d}}|x|^{\mu}|K(t,s,x)|dx\leq N(d,\gamma,\kappa,\mu)(t-s)^{\frac{\mu}{\gamma}-1}.
\end{equation}
Indeed, fix $\mu\in[0,\min\{\gamma,\lfloor\frac{d}{2}\rfloor+1-\frac{d}{2}\})$
and choose $\sigma>0$ such that $$\mu+\frac{d}{2}<\sigma<\min\{\gamma+\frac{d}{2},\lfloor\frac{d}{2}\rfloor+1\}.
$$
Then by H\"older inequality, Parseval's identity, and Lemma \ref{frac est},
\begin{align*}
\int_{\mathbf{R}^{d}} & |x|^{\mu}|K(t,s,x)|dx\\
 & =(t-s)^{\frac{d+\mu}{\gamma}}\int_{\mathbf{R}^{d}}|x|^{\mu}|K(t,s,(t-s)^{\frac{1}{\gamma}}x)|dx\\
 & \leq N(t-s)^{\frac{d+\mu}{\gamma}}\left(\int_{\mathbf{R}^{d}}|(1+|x|^{\sigma})|K(t,s,(t-s)^{\frac{1}{\gamma}}x)|^{2}dx\right)^{1/2}\\
 & \leq N(t-s)^{\frac{\mu}{\gamma}-1}\left(\int_{\mathbf{R}^{d}}\left|(1+(-\Delta_{\xi})^{\sigma/2})F(t,s,\xi)\right|^{2}d\xi\right)^{1/2} \leq N(t-s)^{\frac{\mu}{\gamma}-1}.
\end{align*}
Hence \eqref{eq:integrability} holds for any $0\leq\mu<\min\{\gamma,\lfloor\frac{d}{2}\rfloor+1-\frac{d}{2}\}$. One also can see that
\begin{align*}
\int_{s}^{b}\int_{|x|\geq\rho}|K(t,s,x)|dxdt & \leq\int_{s}^{b}\int_{\mathbf{R}^{d}}\frac{|x|^{\mu}}{\rho^{\mu}}|K(t,s,x)|dxdt\\
 & \leq N\int_{s}^{b}\rho^{-\mu}(t-s)^{\frac{\mu}{\gamma}-1}dt =N\rho^{-\mu}(b-s)^{\frac{\mu}{\gamma}}.
\end{align*}
Therefore, $K$ satisfies \eqref{as 1 3}
with $\varphi(t)=Nt^{\mu}$ for some constant $N=N(d,\gamma,\kappa)$.

Next we prove \eqref{as 1 1} and \eqref{as 1 2}. Note that
\begin{align*}
(t-s)^{1+\frac{d+1}{\gamma}} & \frac{\partial K}{\partial x}(t,s,(t-s)^{\frac{1}{\gamma}}x)\\
 & =N1_{s<t}(t-s)^{1+\frac{d+1}{\gamma}}\int_{\mathbf{R}^{d}}e^{i((t-s)^{\frac{1}{\gamma}}x,\xi)}i\xi|\xi|^{\gamma}\exp\left\{ \int_{s}^{t}\psi(r,\xi)dr\right\} d\xi\\
 & =N1_{s<t}(t-s)^{1+\frac{d+1}{\gamma}}\int_{\mathbf{R}^{d}}e^{i(x,(t-s)^{\frac{1}{\gamma}}\xi)}i\xi|\xi|^{\gamma}\exp\left\{ \int_{s}^{t}\psi (r,\xi)dr\right\} d\xi\\
 & =N1_{s<t}\int e^{i(x,\xi)}i\xi|\xi|^{\gamma}\exp\left\{ \int_{s}^{t}\psi (r,(t-s)^{-\frac{1}{\gamma}}\xi)dr\right\} d\xi.
\end{align*}
For $\frac{d}{2}<\sigma<\min\{\gamma+1+\frac{d}{2},\lfloor\frac{d}{2}\rfloor+1\}$,
by H\"older inequality, Parseval's identity, and Lemma \ref{frac est},
\begin{align*}
\int_{\mathbf{R}^{d}} & |K(t,s,x+y)-K(t,s,x)|dx\\
 & =|y|\int_{\mathbf{R}^{d}}|\nabla K(t,s,x+\theta y)|dx    \quad \quad \quad (\theta\in [0,1])\\
 & =|y|(t-s)^{\frac{d}{\gamma}}\int_{\mathbf{R}^{d}}|\nabla K(t,s,(t-s)^{\frac{1}{\gamma}}x)|dx\\
 & \leq|y|(t-s)^{\frac{d}{\gamma}}\left(\int_{\mathbf{R}^{d}}|(1+|x|^{\sigma})\nabla K(t,s,(t-s)^{\frac{1}{\gamma}}x)|^{2}dx\right)^{1/2}\\
 & \leq N|y|(t-s)^{-\frac{1}{\gamma}-1}\left(\int_{\mathbf{R}^{d}}\left|(1+(-\Delta_{\xi})^{\sigma/2})\xi F(t,s,\xi)\right|^{2}d\xi\right)^{1/2}\\
 & \leq N|y|(t-s)^{-\frac{1}{\gamma}-1}.
\end{align*}
Hence
$$
\int_{a}^{\infty}\int_{\mathbf{R}^{d}}|K(t,s,x+y)-K(t,s,x)|dxdt\leq N|y|(a-s)^{-\frac{1}{\gamma}},
$$
and therefore \eqref{as 1 1} holds. Finally, denote
$$
\bar{F}(t,s,\xi)=1_{s<t}\psi(s,(t-s)^{-\frac{1}{\gamma}}\xi)|\xi|^{2\gamma}\exp\left\{\int_{s}^{t}\psi(r,(t-s)^{-\frac{1}{\gamma}}\xi)dr\right\}
$$
and observe that for $s\leq\max\{r,s\}\leq b<a<t$ and $\frac{d}{2}<\sigma<\min\{2\gamma+\frac{d}{2},\lfloor\frac{d}{2}\rfloor+1\}$,
(write $\tau=\theta s+(1-\theta)r$)
\begin{align*}
\int_{\mathbf{R}^{d}} & |K(t,r,x)-K(t,s,x)|dx\\
 & =|s-r|\int_{\mathbf{R}^{d}}|\partial_{s}K(t,\tau,x)|dx\\
 & =|s-r|(t-\tau)^{\frac{d}{\gamma}}\int_{\mathbf{R}^{d}}|\partial_{s}K(t,\tau,(t-\tau)^{\frac{1}{\gamma}}x)|dx\\
 & =\frac{|s-r|}{(t-\tau)^{2}}\int_{\mathbf{R}^{d}}|\mathcal{F}^{-1}\left\{ \bar{F}(t,\tau,\cdot)\right\} (x)|dx\\
 & \leq N\frac{|s-r|}{(t-\tau)^{2}}\left(\int_{\mathbf{R}^{d}}\left|(1-\Delta_{\xi}^{\sigma/2})\bar{F}(t,\tau,\xi)\right|^{2}d\xi\right)^{1/2}\\
 & \leq N\frac{|s-r|}{(t-b)^{2}}1_{\tau<t}.
\end{align*}
Therefore,
\begin{align*}
&\int_{a}^{\infty}\int_{\mathbf{R}^{d}}|K(t,r,x)-K(t,s,x)|dxdt \\& \leq N\int_{a}^{\infty}\frac{|s-r|}{(t-b)^{2}}dt
  \leq N|s-r|\int_{a-r}^{\infty}\frac{1}{t^{2}}dt
  =\frac{N|s-r|}{a-r}.
\end{align*}
This certainly leads to \eqref{as 1 2}, and thus  Assumption \ref{as 1} holds.

 {\bf{Part 2}}. We prove  \eqref{Gf estimate} when $p=q$. First we show $K$ satisfies \eqref{p0 conti} with $p_0=2$.
Due to Parseval's ideneity, for any $f \in L_2(\fR^{d+1})$ it holds that
\begin{align*}
 & \|\mathcal{G}f\|_{L_{2}(\mathbf{R}^{d+1})}^{2}\\
 & =N\int_{-\infty}^{\infty}\int_{\mathbf{R}^{d}}\left|\int_{-\infty}^{t}|\xi|^{\gamma}\exp\left\{ \int_{s}^{t}\psi(r,\xi)dr\right\} \mathcal{F}(f)(s,\xi)ds\right|^{2}d\xi dt\\
 & \leq N\int_{-\infty}^{\infty}\int_{\mathbf{R}^{d}}\left|\int_{-\infty}^{t}|\xi|^{\gamma}\exp\left\{ -\kappa|\xi|^{\gamma}(t-s)\right\} |\mathcal{F}(f)(s,\xi)|ds\right|^{2}d\xi dt\\
 & =N\int_{-\infty}^{\infty}\int_{\mathbf{R}^{d}}\left|\int_{-\infty}^{\infty}e^{it\tau}\left[\int_{-\infty}^{\infty} 1_{s<t}|\xi|^{\gamma}\exp\left\{ -\kappa|\xi|^{\gamma}(t-s)\right\} |\mathcal{F}(f)|(s,\xi)ds\right]dt\right|^{2}d\xi d\tau\\
 & =N\int_{-\infty}^{\infty}\int_{\mathbf{R}^{d}}\left|\left(\int_{0}^{\infty}e^{it\tau}|\xi|^{\gamma}\exp\{-\kappa|\xi|^{\gamma}t\}dt\right)\left(\int_{-\infty}^{\infty}e^{it\tau} |\mathcal{F}(f)|(t,\xi)dt\right)\right|^{2}d\xi d\tau\\
 & \leq N\int_{-\infty}^{\infty}\int_{\mathbf{R}^{d}}\left|\frac{|\xi|^{\gamma}}{i\tau-\kappa|\xi|^{\gamma}}\right|^{2}\left|\int_{-\infty}^{\infty}e^{it\tau} |\mathcal{F}(f)|(t,\xi)dt\right|^{2}d\xi d\tau\\
 & \leq N\int_{-\infty}^{\infty}\int_{\mathbf{R}^{d}}| \mathcal{F}(f)|^{2}d\xi dt=N\|f\|_{L_{2}(\mathbf{R}^{d+1})}^{2}.
\end{align*}
Actually $\cG f$ was defined only for $f\in C^{\infty}_0(\fR^{d+1})$. However, the calculations above show it is also defined on $L_2(\fR^{d+1})$.
Therefore $K$ satisfies \eqref{p0 conti} with $p_0=2$.
 Hence by Theorem \ref{main 11},  (\ref{Gf estimate})  holds for $p=q$, $1<p \leq 2$, and  for all $f\in L_{p}(\mathbf{R}^{d+1})$. For $p \in(2,\infty)$, we apply the standard duality argument. Denote $p'=p/(p-1)$ and
$$
P(s,t,x)=K(-t,-s,x)=1_{t<s}\mathcal{F}^{-1}\left\{|\xi|^{\gamma}\exp\left(\int_{t}^{s}\psi(-r,\xi)dr\right)\right\},
$$
and define operator $\mathcal{P}:L_{p'}(\mathbf{R}^{d+1})\rightarrow L_{p'}(\mathbf{R}^{d+1})$ by
$$
\mathcal{P}g(s,y)=\int_{\mathbf{R}^{d+1}}P(s,t,y-x)g(t,x)dxdt.
$$
Note that $\psi(-r,\xi)$ also satisfies \eqref{ellipticity} and \eqref{differentiability}.
Then for $g\in C_{0}^{\infty}(\mathbf{R}^{d+1})$, by change of variable $(t,s,x,y)\rightarrow(-t,-s,-x, -y)$ and Fubini's theorem we have
\begin{align}\label{duality}
\int_{\mathbf{R}^{d+1}}&g(t,x)\mathcal{G}f(t,x)dxdt\nonumber
\\ &=\int_{\mathbf{R}^{d+1}}g(t,x)\left(\int_{\mathbf{R}^{d+1}}K(t,s,x-y)f(s,y)dyds\right)dxdt\nonumber
\\ &=\int_{\mathbf{R}^{d+1}}f(s,y)\left(\int_{\mathbf{R}^{d+1}}K(t,s,x-y)g(t,x)dxdt\right)dyds
\\ &=\int_{\mathbf{R}^{d+1}}f(-s,-y)\left(\int_{\mathbf{R}^{d+1}}P(s,t,y-x)g(-t,-x)dxdt\right)dyds\nonumber
\\ &=\int_{\mathbf{R}^{d+1}}f(-s,-y)\mathcal{P}\bar{g}(s,y)dyds,\nonumber
\end{align}
where $\bar{g}(t,x)=g(-t,-x)$. By applying H\"{o}lder inequality,
$$
\int_{\mathbf{R}^{d+1}}g(t,x)\mathcal{G}f(t,x)dxdt\leq N\|f\|_{L_{p}(\mathbf{R}^{d+1})}\|\mathcal{P}\bar{g}\|_{L_{p'}(\mathbf{R}^{d+1})}.
$$
 Since $p' \in (1,2]$, we have $\|\mathcal{P}\bar{g}\|_{L_{p'}(\mathbf{R}^{d+1})}\leq N\|g\|_{L_{p'}(\mathbf{R}^{d+1})}$.
  This implies the desired result since $g\in C_{0}^{\infty}(\mathbf{R}^{d+1})$ is arbitrary. Thus \eqref{Gf estimate} holds for all $p\in (1,\infty)$.

{\bf{Part 3}}. Finally we check that $K$ satisfies \eqref{pq con} and prove \eqref{Gf estimate} for general $p,q>1$. Recall the operator $\mathcal{K}(t,s)$, that is
$$
\mathcal{K}(t,s)f(x)=\int_{\mathbf{R}^{d}}K(t,s,x-y)f(y)dy
$$
for $f\in C_0^\infty$ and $t,s\in\mathbf{R}$. Fix $p\in (1,\infty)$. By \eqref{eq:integrability}, we have
\begin{align*}
\|\mathcal{K}(t,s)f\|_{L_{p}}\leq N\|f\|_{L_{p}}\int_{\mathbf{R}^{d}}|K(t,s,y)|dy \leq N \|f\|_{L_{p}}(t-s)^{-1}.
\end{align*}
Hence \eqref{pq con} is satisfied with $\varphi(t)= t^{-1}$ and $p_{0}=p$.
Therefore from Theorem \ref{main 22} we conclude that for any $1<q\leq p$, \eqref{Gf estimate} holds for all $f\in C_{0}^{\infty}(\mathbf{R},L_{p})$.

Now let $1<p<q<\infty$. Define $p'=p/(p-1)$, $q'=q/(q-1)$. Since $1<q'<p'$, by \eqref{duality} we conclude that
\begin{align*}
\int_{\mathbf{R}^{d+1}}g(t,x)\mathcal{G}f(t,x)dxdt &=\int_{\mathbf{R}}\left(\int_{\mathbf{R}^{d}}f(-s,-y)\mathcal{P}\bar{g}(s,y)dy\right)ds \\
&\leq \int_{\mathbf{R}}\|f(-s,\cdot)\|_{L_{p}}\|\mathcal{P}\bar{g}(s)\|_{L_{p'}}ds\nonumber\\
&\leq N\|f\|_{L_{q}(\mathbf{R},L_{p})}\|g\|_{L_{q'}(\mathbf{R},L_{p'})}
\end{align*}
for any $f,g\in C_{0}^{\infty}(\fR^{d+1})$. Since $g$ is arbitrary and
$$
\|\mathcal{G}f\|_{L_{q}(\mathbf{R},L_{p})}=\sup_{\|g\|_{L_{q'}(\mathbf{R},L_{p'})}\leq 1}\left|\int_{\mathbf{R}^{d+1}}g(t,x)\mathcal{G}f(t,x)dxdt\right|,
$$
we have
$$
\|\mathcal{G}f\|_{L_{q}(\mathbf{R},L_{p})} \leq N \|f\|_{L_{q}(\mathbf{R},L_{p})}.
$$
Therefore for any $p, q \in (1, \infty)$, we obtain
\begin{align}
                    \label{lp lq est}
\|\cG f \|_{L_q(\fR,L_{p})}  \leq N \| f \|_{L_q(\fR,L_{p})}, \quad \forall f \in C_0^\infty(\fR^{d+1}),
\end{align}
where $N$ is independent of $f$. Since $C_0^\infty(\fR^{d+1})$ is dense in $L_q(\fR, L_p)$, $\cG$ is continuously extendible to $L_q(\fR, L_{p})$.
\end{proof}

Next, we prove a priori estimate.

\begin{lemma}[a priori estimate]
			\label{a priori lemma}
Let $\lambda \geq 0$ and $p,q \in (1,\infty)$. Suppose that conditions \eqref{ellipticity} and \eqref{differentiability} are fulfilled.
Then for any $u\in C_{0}^{\infty}(\mathbf{R}^{d+1})$, we
have
\begin{align}
&\|(-\Delta)^{\gamma/2}u\|_{L_q(\fR, L_p)}+\lambda\|u\|_{L_q(\fR, L_p)} \leq N\|u_t-\mathcal{A}u+\lambda u\|_{L_q(\fR, L_p)},
                \label{a priori est}
\end{align}
where $N$ depends only on $d,p,\gamma,$ and $\kappa$.
\end{lemma}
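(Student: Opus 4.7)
Fix $u \in C_{0}^{\infty}(\fR^{d+1})$ and set $f := u_{t} - \cA u + \lambda u$. Applying the spatial Fourier transform to this equation produces a first-order linear ODE in $t$ for $\cF(u)(t,\xi)$, and since $u$ has compact support in $t$ we may integrate from $-\infty$ to obtain the representation $u = \cR_{\lambda} f$ with $\cR_{\lambda}$ as in Section 5. The bound $\lambda\|u\|_{L_q(\fR,L_p)} \leq N\|f\|_{L_q(\fR,L_p)}$ is then immediate from \eqref{norm of R} in Lemma \ref{thm:norm of R}.

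For the remaining half of the estimate, a direct Fourier computation gives $(-\Delta)^{\gamma/2}u(t,x) = \int_{-\infty}^{t}\!\int_{\fR^{d}} K_{\lambda}(t,s,x-y)\,f(s,y)\,dy\,ds$, where $K_{\lambda}(t,s,x) := e^{-\lambda(t-s)} K(t,s,x)$ and $K$ is the kernel \eqref{eqn 14.6}. Denoting the associated integral operator by $\cG_{\lambda}$, the task reduces to proving $\|\cG_{\lambda} f\|_{L_q(\fR,L_p)} \leq N\|f\|_{L_q(\fR,L_p)}$ with $N$ independent of $\lambda \geq 0$. The plan is to show that $K_{\lambda}$ satisfies Assumption \ref{as 1} together with \eqref{p0 conti} and \eqref{pq con} (with $p_{0} = 2$) uniformly in $\lambda$, and then invoke Theorems \ref{main 11} and \ref{main 22}, followed by duality exactly as in Part 3 of the proof of the theorem immediately preceding this lemma.

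Since $0 < e^{-\lambda(t-s)} \leq 1$ for $t > s$, conditions \eqref{as 1 1} and \eqref{as 1 3} of Assumption \ref{as 1}, and also \eqref{pq con} (using the $L^{1}_{x}$-bound $\|K(t,s,\cdot)\|_{L^{1}} \leq N/(t-s)$ obtained from \eqref{eq:integrability} at $\mu = 0$), transfer from $K$ to $K_{\lambda}$ with no $\lambda$-loss. The $L_{2}$-estimate \eqref{p0 conti} is re-derived by the same Plancherel--Young argument as in Part 2 of that preceding proof, where the additional factor $e^{-\lambda(t-s)}$ only improves the bound. The genuine obstruction is \eqref{as 1 2}: since $\partial_{s}K_{\lambda}(t,s,x) = e^{-\lambda(t-s)}\bigl(\lambda K(t,s,x) + \partial_{s}K(t,s,x)\bigr)$, the new piece contributes $\lambda e^{-\lambda(t-s)}/(t-s)$ to the $L^{1}_{x}$-norm, which is not integrable in $t$ on its own. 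This is where the main work lies.

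To overcome this obstacle I would exploit the antiderivative identity $-\tfrac{d}{d\tau}\bigl(e^{-\lambda\tau}/\tau\bigr) = e^{-\lambda\tau}\bigl(\lambda/\tau + 1/\tau^{2}\bigr)$, which yields $\int_{a-\sigma}^{\infty} e^{-\lambda\tau}\bigl(\lambda/\tau + 1/\tau^{2}\bigr)\,d\tau = e^{-\lambda(a-\sigma)}/(a-\sigma) \leq 1/(a-r)$ whenever $\sigma \leq r < a$. Combined with $\|K(t,\sigma,\cdot)\|_{L^{1}} \leq N/(t-\sigma)$ and $\|\partial_{\sigma} K(t,\sigma,\cdot)\|_{L^{1}} \leq N/(t-\sigma)^{2}$ (both extracted from Part 1 of the preceding proof) and the triangle-type bound $|K_{\lambda}(t,r,x) - K_{\lambda}(t,s,x)| \leq \int_{s}^{r} |\partial_{\sigma} K_{\lambda}(t,\sigma,x)|\,d\sigma$, Fubini produces $\int_{a}^{\infty}\!\int_{\fR^{d}} |K_{\lambda}(t,s,x) - K_{\lambda}(t,r,x)|\,dx\,dt \leq N|s-r|/(a-r)$, giving \eqref{as 1 2} with $\varphi(t) = Nt$ uniformly in $\lambda$. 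Theorems \ref{main 11} and \ref{main 22} then deliver the desired $L_q(\fR,L_p)$-estimate on $\cG_{\lambda}$, and combining with the bound on $\lambda\|u\|_{L_q(\fR,L_p)}$ establishes \eqref{a priori est}.
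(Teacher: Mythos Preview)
Your argument is correct, but it takes a genuinely different and considerably more laborious route than the paper. You work directly with the $\lambda$-dependent kernel $K_{\lambda}(t,s,x)=e^{-\lambda(t-s)}K(t,s,x)$ and re-verify Assumption \ref{as 1}, \eqref{p0 conti}, and \eqref{pq con} uniformly in $\lambda$; the delicate part is \eqref{as 1 2}, which you handle by the nice antiderivative identity $-\tfrac{d}{d\tau}(e^{-\lambda\tau}/\tau)=e^{-\lambda\tau}(\lambda/\tau+1/\tau^{2})$, so that the extra $\lambda$-term integrates cleanly. This is valid and gives a $\lambda$-independent constant.

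The paper avoids all of this with a one-line reduction. Having established $\lambda\|u\|_{L_q(\fR,L_p)}\leq N\|f\|_{L_q(\fR,L_p)}$ from Lemma \ref{thm:norm of R}, it simply writes $u_t-\cA u=f-\lambda u$ and applies the $\lambda=0$ estimate \eqref{Gf estimate} (i.e.\ $(-\Delta)^{\gamma/2}u=\cG(u_t-\cA u)$) to get
\[
\|(-\Delta)^{\gamma/2}u\|_{L_q(\fR,L_p)}\leq N\|u_t-\cA u\|_{L_q(\fR,L_p)}\leq N\bigl(\|f\|_{L_q(\fR,L_p)}+\lambda\|u\|_{L_q(\fR,L_p)}\bigr)\leq N'\|f\|_{L_q(\fR,L_p)}.
\]
Thus no $\lambda$-dependent kernel analysis is needed at all. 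Your approach has the (minor) conceptual advantage of showing directly that the operator $\cG_{\lambda}$ is bounded uniformly in $\lambda$, but the paper's trick is much shorter and is worth knowing: once the $\lambda=0$ estimate is in hand, the $\lambda>0$ case follows by triangle inequality plus the zeroth-order bound on $\cR_{\lambda}$.
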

\begin{proof}
Put $f:=\frac{\partial}{\partial t}u-\mathcal{A}u+\lambda u$. Then obviously $f \in L_2(\fR^{d+1})$.

{\bf Case 1} $\lambda =0$. By taking the Fourier transform, we can easily check that
$$
(-\Delta)^{\gamma/2} u = \cG f \quad (a.e).
$$
Hence from \eqref{lp lq est}, we have
\begin{align}
                \label{lambda zero}
\|(-\Delta)^{\gamma/2}u\|_{L_q(\fR, L_p)}\leq N\|u_t-\mathcal{A}u\|_{L_q(\fR, L_p)}.
\end{align}

{\bf Case 2} $\lambda >0$. Similarly one can also check $ u = \cR f ~ (a.e)$.
Hence \eqref{a priori est} is a consequence of \eqref{norm of R} and \eqref{lambda zero} because
$$
\|u_t-\mathcal{A}u\|_{L_q(\fR, L_p)} \leq \|f\|_{L_q(\fR, L_p)} + \lambda \|u\|_{L_q(\fR, L_p)}.
$$
The lemma is proved.
\end{proof}

\textbf{Proof of Theorem \ref{main appl}}

Note that $C_0^\infty (\fR^{d+1})$ is dense in $\bH_{q,p}^{1,\gamma}$  and
$\cA(t)$ is a continuous operator on $H_{p}^{\gamma}$ due to Mihlin multiplier theorem.
Indeed, for $|\alpha|\leq \lfloor\frac{d}{2}\rfloor+1$,
$$
\left|D_{\xi}^{\alpha}\left(\frac{\psi(t,\xi)}{|\xi|^{\gamma}}\right)\right|\leq N\frac{|\xi|^{2|\alpha|\gamma-|\alpha|}}{|\xi|^{2\gamma|\alpha|}}\leq N(\kappa)|\xi|^{-|\alpha|}.
$$
Hence from Lemma \ref{a priori lemma},
\begin{align*}
\notag &\|u_{t}\|_{L_q(\fR, L_p)}+\|(-\Delta)^{\gamma/2}u\|_{L_q(\fR, L_p)}+\lambda\|u\|_{L_q(\fR, L_p)} \\
&\leq N(d,p,q,\gamma,\kappa)\|u_t-\mathcal{A}u+\lambda u\|_{L_q(\fR, L_p)}
\end{align*}
 for any $u \in \bH_{q,p}^{1,\gamma}$, and the uniqueness of  solutions to  \eqref{eqn 8.13} is proved.

It only remains to prove the existence of solutions. For $f \in \bH_{q,p}^{1,\gamma}$, we consider a sequence $f_n \in C_0^\infty(\fR^{d+1})$ so that
$ \|f_n -f\|_{\bH_{q,p}^{1,\gamma}} \to 0$ as $n \to \infty$. For each $n$, we can easily check that $\cR f_n$ is a solution to  \eqref{eqn 8.13}.
Since $\bH_{q,p}^{1,\gamma}$ is a Banach space, we can find a solution $u$ as the limit of $\cR f_n$ in $\bH_{q,p}^{1,\gamma}$ using the a priori estimate.
The theorem is proved. \qed

\bibliographystyle{amsplain}

\end{document}